\newtheorem{Theorem}{Theorem}
\newtheorem{Lemma}{Lemma}
\newtheorem{Corollary}{Corollary}
\newtheorem{Remark}{Remark}
\newtheorem{Proposition}{Proposition}
\newtheorem{Assumption}{Assumption}
\begin{document}

\title{Azadkia–Chatterjee's dependence coefficient\\for infinite dimensional data}
 \author{Siegfried H\"ormann\thanks{\href{mailto:shoermann@tugraz.at}{shoermann@tugraz.at, \orcidlink{0000-0001-5878-2840}}}\hspace{.2cm} and Daniel Strenger\thanks{\href{mailto:daniel.strenger@tugraz.at}{daniel.strenger@tugraz.at, \orcidlink{0009-0002-2465-604X}}}\\
  Institute of Statistics, Graz University of Technology}

\maketitle

\begin{abstract}
\ We extend the scope of Azadkia–Chatterjee's dependence coefficient between a scalar response $Y$ and a multivariate covariate $X$ to the case where $X$ takes values in a general metric space. Particular attention is paid to the case where $X$ is a curve. Although extending this framework at the population level is relatively straightforward, analyzing the asymptotic behavior of the estimator proves to be complex. This complexity is largely related to the nearest neighbor structure of the infinite-dimensional covariate sample, leading us to explore a topic that has not been previously addressed in the literature. The primary contribution of this paper is to provide insights into this issue and propose strategies to address it. Our findings also have significant implications for other graph-based methods facing similar challenges.\end{abstract}

\section{Introduction}

Assume that~$Y$ is a real random variable with distribution function~$F(t)$. 
For some covariate~$X$, let~$G_X(t)=P(Y\geq~t|X)$.
\cite{azadkia_simple_2021} have studied a dependence coefficient between~$Y$ and~$X$, which, for continuous~$F$, can be expressed as
 \begin{equation}
  T(X,Y):=6\times\int \mathrm{Var}(G_X(t))dF(t).
 \end{equation}
Note that if~$X$ and~$Y$ are independent, then~$G_X(t)=G(t)=P(Y\geq~t)$, which is non-random, and hence we have~$T(X,Y)=~0$. On the other hand, if~$Y=f(X)$ for some measurable function~$f$, then~${G_X(t)=1\{Y\geq~t\}}$ and thus it follows that $T(X,Y)=6\int F(t)(1-F(t))dF(t)=1$. \cite{azadkia_simple_2021} show that~$0\leq~T(X,Y)\leq~1$. Moreover,~$T(X,Y)=0$ if and only if~$X$ and~$Y$ are independent and~$T(X,Y)=~1$ if and only if~$Y=f(X)$. 

\par When $X$ is a continuous scalar variable 
this measure and variants thereof have been already introduced earlier in \cite{trutschnig_strong_2011} and \cite{dette_copula-based_2013}. These authors relate the considered dependence measures  to the copula that links the distributions of~$X$ and~$Y$. 
\cite{chatterjee_new_2020} has extended their work to arbitrary marginals and proposed a  tuning parameter-free estimator. Since then, the dependence coefficient~$T=T(X,Y)$ has attracted a lot of attention. \cite{azadkia_simple_2021} have treated a further generalization allowing for multivariate~$X$ and  conditional dependence.
An alternative approach for multivariate and continuously distributed $X$ was given by \cite{griessenberger_multivariate_2022}.  \cite{cao_correlations_2020} relate it to the maximal correlation coefficient and study similar measures that are able to detect functional relationships of prespecified shape.

\par Much research has been conducted on the asymptotic behavior of related estimators and independence tests: \cite{shi_power_2022} prove the asymptotic normality of the estimator proposed by \cite{azadkia_simple_2021} when~$X$ and~$Y$ are independent and reveal a weakness in the corresponding test's capability to identify local alternatives, a matter that is also treated by \cite{bickel_measures_2022} and \cite{lin_boosting_2023}. The latter papers also advise on how to overcome these issues. \cite{auddy_exact_2024} investigate for which kind of contiguous alternatives the test possesses non-trivial power. As~$T$ is non-symmetric, \cite{zhang_asymptotic_2023} studies the asymptotic behavior of a symmetrized version proposed by \cite{chatterjee_new_2020} under the null hypothesis of independence. For a detailed review of recent developments, we refer to \cite{chatterjee_survey_2023}.

\par Motivated by our research in functional data analysis (FDA), a main target of this article is to explore asymptotic inferential procedures for~$T$ (consistent estimation and testing $T=0$) when the covariates~$X$ take values in an infinite-dimensional space. We are particularly interested in the case where~${X=(X(u)\colon u\in\mathcal{U})}$ is a curve. A very common setting in FDA is that~${X\in L^2(\mathcal{U})}$, the space of square integrable functions on some continuum~$\mathcal{U}$. 

\par Attempts to generalize the setting of \cite{azadkia_simple_2021}  to a broad class of topological spaces have been made previously by \cite{deb_measuring_2020} (unconditional dependence) and \cite{huang_kernel_2022} (conditional dependence). Those authors consider measures of association based on so called characteristic kernels. The empirical versions of these measures are relying on geometric graphs related to the sample, like the $k$-nearest neighbor graph.  \cite{deb_measuring_2020} show that the measure~$T$ and the corresponding estimator can be viewed as a special case (see their Proposition 8.2).

A bottleneck arising in current literature arises from required bounds on the maximum degree of the underlying graphs. While the assumptions made can be shown to hold for multivariate data, they are either difficult to verify or, as we are going to demonstrate, may even fail in infinite-dimensional spaces. Our Theorem~\ref{thm:growing-degree} below gives some insight in this matter and shows that nearest neighbor graphs for functional data can have maximal degrees which do not just diverge with sample size, but their rate of divergence can be rather fast. This result easily extends to $k$-nearest neighbor graphs and minimal spanning trees (see Remark~\ref{remark:knn-mst}) and illustrates that assumptions that have been used previously in the literature may fail in infinite-dimensional settings. Since nearest-neighbor methods are an important ingredient to many statistical and machine learning procedures, our  findings may have far-reaching implications for the applicability of graph based methods in the context of functional data.

The subsequent sections are organized as follows: In Section~\ref{s:estimator}, we will briefly outline the estimation approach in \cite{azadkia_simple_2021}. We show that under fairly mild technical conditions the estimator remains consistent in general metric spaces. We also use this example to explain the difficulties that one incurs when using nearest-neighbor-based statistics in infinite-dimensional spaces.  In Section~\ref{s:main-results} we construct some specific functional random samples for which we are able to  give upper and lower bounds on the maximal degree of the resulting nearest neighbor graphs. Additionally, we obtain the limiting distribution of the estimator by \cite{azadkia_simple_2021} under independence. The resulting independence test is shown to be universally consistent. We apply this test to real world data in Section~\ref{s:real-data}. In Section~\ref{s:proofs} we give detailed proofs. \appendixtext{}{The supplement to this article~\citep{hormann_supplement_2024}} contains a simulation study as an illustration of the empirical performance of our theory.

\section{An estimator for \texorpdfstring{$T(X,Y)$}{T(X,Y)}}\label{s:estimator}

For the remainder of this paper, we impose some mild technical requirements that will streamline our presentation. To this end, we define~${H(t):=P(d(X,X')\leq~t)}$, where~$X'$ an independent copy of~$X$.

\begin{Assumption}\label{ass:cont} We have
\begin{enumerate}[(a)]
\item~$H(t)$ is continuous; 
\item~$F(t)$ is continuous;
\item for~$P_Y:=P\circ Y^{-1}$ almost all~$t$, the mapping~$x\mapsto G_x(t)$ is continuous~$P_X:=P\circ X^{-1}$ almost everywhere.
\end{enumerate}
\end{Assumption}
Assumption~\ref{ass:cont} constitutes a set of continuity conditions on the distribution of~$(X,Y)$. Continuity assumptions have also been used in \cite{dette_copula-based_2013}, while \cite{azadkia_simple_2021} work under general distributional assumptions. We note that Assumption~\ref{ass:cont} could be relaxed, but in the context of functional data, our assumptions are reasonably general and common. For example, a violation of (a) would arise if~$P_X$ is a discrete measure, which is quite uncommon for a functional data model. %

Assumption~\ref{ass:cont} (b) implies that~$\mathrm{Var}(G_X(t))=\ev G_X^2(t)-G^2(t)$ and~$\int G^2(t)dF(t)=~1/3$. Hence, estimation of~$T(X,Y)$ reduces to estimation of
\begin{equation}\label{e:Q}
Q(X,Y):=\int \ev G_X^2(t)dF(t).
\end{equation}
Consider a random sample~$\{(X_i,Y_i)$,~$1\leq~i\leq~n\}$ with~$(X_i,Y_i)\sim (X,Y)$. 
For every~${i\in \{1,\ldots, n\}}$, let~$N(i)=N_n(i)$ be the index of the nearest neighbor of~$X_i$ in the sample of covariates~$X_1,\ldots, X_n$, that is,~$d(X_i,X_{N(i)})\leq~d(X_i,X_j)$ for all~$j\neq i$. Assumption~\ref{ass:cont} (a) implies that~$N(i)$ is unique with probability one. If~$n$ is large, we expect that~$X_{N(i)}$ is close to~$X_i$ and therefore, considering Assumption~\ref{ass:cont}~(c), we get by some heuristics that for almost all~$t$
\begin{equation}\label{e:crucialapprox}
\ev G_X^2(t)\approx \ev G_{X_i}(t)G_{X_{N(i)}}(t).
\end{equation} Moreover, noting that
\begin{align}\begin{split}
\ev G_{X_i}(t)G_{X_{N(i)}}(t)
&=\ev\Big(\ev\big[1\{Y_i\geq~t\}1\{Y_{N(i)}\geq~t\}|X_1,\ldots, X_n\big]\Big)\\
&=\ev 1\{Y_i\geq~t\}1\{Y_{N(i)}\geq~t\},
\end{split}\end{align}
we obtain  the approximation
\begin{equation}
Q(X,Y)\approx E\int 1\{Y_i\geq~t\}1\{Y_{N(i)}\geq~t\}dF(t)=E \min\{F(Y_i),F(Y_{N(i)})\}.
\end{equation}
This motivates the estimators
\begin{equation}\label{e:estimatorQ}
\widehat Q_n=\frac{1}{n}\sum_{i=1}^n \min\{F_n(Y_i),F_n(Y_{N(i)})\}\quad \text{and}\quad \widehat T_n=6\,\widehat Q_n-2,
\end{equation}
with~$F_n$ being the empirical distribution function of~$Y_1,\ldots, Y_n$.
\begin{Remark}
We note that~$\widehat T_n$ is, in essence, the estimator given in \cite{azadkia_simple_2021}, where we have been taking into account that~$Y$ is supposed to have a continuous distribution. 
\end{Remark}
\begin{Remark}
Let~$\mathcal{G}_n=\mathcal{G}_n(X_1,\ldots,X_n)$ be the nearest neighbor graph related to the sample~$X_1,\ldots, X_n$. 
Then~$\widehat T_n$ is a functional of the responses~$Y_1,\ldots, Y_n$ and of~$\mathcal{G}_n$.
\end{Remark}

A non-trivial question is whether the heuristics leading to~$\widehat T_n$ can be rigorously justified, i.e.\ whether~$\widehat T_n$ is a consistent estimator of~$T(X,Y)$. Our first result establishes weak consistency of~$\widehat T_n$ for i.i.d.\ data under the continuity properties stated in Assumption~\ref{ass:cont}.

\begin{Theorem}\label{thm:consistency}
 Assume that~$X$ takes values in a separable metric space~$(H,d)$. Let Assumption~\ref{ass:cont} hold. Then we have that 
 \begin{equation}\label{e:consistency}
 \widehat T_n\convP T(X,Y)\quad \text{as}\quad n\to\infty. 
 \end{equation}
\end{Theorem}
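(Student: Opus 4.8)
The overall strategy is to show that $\widehat Q_n \convP Q(X,Y)$, which by \eqref{e:estimatorQ} and the identity $T = 6Q - 2$ yields \eqref{e:consistency}. I would split the analysis into two reductions. First, replace the empirical distribution function $F_n$ inside $\widehat Q_n$ by the true $F$: since $\sup_t |F_n(t) - F(t)| \convas 0$ by Glivenko--Cantelli and $u \mapsto \min\{u, v\}$ is $1$-Lipschitz, the difference between $\widehat Q_n$ and $\widetilde Q_n := \frac1n \sum_i \min\{F(Y_i), F(Y_{N(i)})\}$ is bounded by $\sup_t|F_n(t)-F(t)| \to 0$ almost surely. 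So it suffices to prove $\widetilde Q_n \convP Q(X,Y)$. Second, following the heuristic in the text, I would aim to show $\ev \widetilde Q_n \to Q(X,Y)$ and $\Var(\widetilde Q_n) \to 0$.

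For the mean, write $\ev \widetilde Q_n = \ev \min\{F(Y_1), F(Y_{N(1)})\} = \ev \int 1\{Y_1 \ge t\} 1\{Y_{N(1)} \ge t\}\, dF(t)$. Conditioning on $X_1, \ldots, X_n$ and using Assumption \ref{ass:cont}(c), for $P_Y$-a.e.\ $t$ the integrand's conditional expectation equals $G_{X_1}(t) G_{X_{N(1)}}(t)$, so $\ev \widetilde Q_n = \int \ev[G_{X_1}(t) G_{X_{N(1)}}(t)]\, dF(t)$. The crucial step is to justify $\ev[G_{X_1}(t) G_{X_{N(1)}}(t)] \to \ev G_X^2(t)$ for a.e.\ $t$, followed by dominated convergence (the integrand is bounded by $1$ and $F$ is a probability measure). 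For this I would invoke the standard fact that in a separable metric space the nearest neighbor is consistent in the sense $d(X_1, X_{N(1)}) \convP 0$ as $n \to \infty$ (this holds because for any $\varepsilon > 0$, $P(d(X_1, X_{N(1)}) > \varepsilon) = \ev[(1 - P(d(X_1, X') \le \varepsilon \mid X_1))^{n-1}]$, and $P(d(X_1, X') \le \varepsilon \mid X_1) > 0$ almost surely by separability, so the bound tends to $0$ by dominated convergence). Combining $d(X_1, X_{N(1)}) \convP 0$ with the a.e.\ continuity of $x \mapsto G_x(t)$ from Assumption \ref{ass:cont}(c) and boundedness gives the desired convergence via the bounded convergence theorem (passing to a subsequence along which convergence is a.s., if needed).

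For the variance, I would show $\ev \widetilde Q_n^2 \to Q(X,Y)^2$. Expanding, $\ev \widetilde Q_n^2 = \frac{1}{n^2}\sum_{i,j} \ev[\min\{F(Y_i),F(Y_{N(i)})\}\min\{F(Y_j),F(Y_{N(j)})\}]$. The diagonal terms ($i=j$) number $n$ and are bounded, contributing $O(1/n) \to 0$. For off-diagonal terms, the key point is that the pair $(Y_i, Y_{N(i)})$ and $(Y_j, Y_{N(j)})$ become asymptotically independent: writing each summand by conditioning on $X_1,\ldots,X_n$ as $\ev[g_{X_i, X_{N(i)}}\, g_{X_j, X_{N(j)}}]$ with $g_{x,x'} = \int G_x(t)G_{x'}(t)\,dF(t)$, one uses that for most pairs $(i,j)$ the four indices $i, N(i), j, N(j)$ are distinct and, conditionally, the corresponding $Y$'s are independent — reducing the summand to $\ev[g_{X_i,X_{N(i)}}]\,\ev[g_{X_j,X_{N(j)}}] \approx Q(X,Y)^2$ up to an error governed by $d(X_i,X_{N(i)}) + d(X_j,X_{N(j)}) \convP 0$. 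The number of pairs where the indices overlap (e.g.\ $N(i) = j$, or $N(i) = N(j)$, or $i = N(j)$) must be shown to be $o(n^2)$; this is where the maximum degree $L_n$ enters, and the main obstacle is precisely that $L_n$ is unbounded in infinite dimensions. However, for the \emph{first moment} consistency one only needs $\ev L_n$ or the typical degree to be controlled, and in fact $\sum_i L_{i,n} = n$ always (each point has exactly one nearest neighbor), so the count of pairs with $N(i) = j$ is exactly $n$, the count with $N(j) = i$ is $n$, and the count with $N(i) = N(j)$ is $\sum_k L_{k,n}^2$; the delicate part is bounding $\ev \sum_k L_{k,n}^2 = o(n^2)$, i.e.\ showing the degree sequence is not too heavy-tailed. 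I expect this variance bound — specifically controlling $\ev\sum_k L_{k,n}^2$ — to be the main technical obstacle, and it is likely handled by a separate moment estimate on the nearest-neighbor graph's degree distribution (in contrast to the deterministic bound \eqref{eq:nb-bound} available in $\mathbb{R}^p$), or alternatively by a direct argument showing that the contribution of high-degree vertices is negligible because their nearest neighbors are still close in the metric.
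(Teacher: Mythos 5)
Your reduction to showing $\widehat Q_n \convP Q$, your proof that the $F_n$-to-$F$ replacement is harmless, and your argument for $\ev\widetilde Q_n \to Q$ (nearest-neighbor consistency in a separable metric space, continuity of $x\mapsto G_x(t)$, bounded convergence) all match the paper's Lemmas~\ref{lem:neighbor-convergence}, \ref{lem:expectaton-convergence} and the body of its proof of Theorem~\ref{thm:consistency}. The gap is in the variance step. You correctly reduce the problem to controlling the off-diagonal pairs whose index sets overlap, identify the required bound $\ev\sum_k L_{k,n}^2 = o(n^2)$, and then stop, flagging it as an obstacle that might need a separate moment estimate on the degree distribution. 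You also write the factorization on the good event as $\ev[g_{X_i,X_{N(i)}}]\,\ev[g_{X_j,X_{N(j)}}]$; this is not literally what conditional independence gives you, since $g_{X_i,X_{N(i)}}$ and $g_{X_j,X_{N(j)}}$ are not independent of each other (both are functions of the entire covariate sample). One can still push the argument through by noting each factor converges in probability to $\int G_{X_i}^2\,dF$, and $\int G_{X_1}^2\,dF \perp \int G_{X_2}^2\,dF$, but this needs to be said.

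The paper closes the moment gap with an idea you have all the ingredients for but do not invoke: exchangeability. Since $(X_i,Y_i)$ are i.i.d., every off-diagonal term $\Cov(Z_i,Z_j)$ with $Z_i:=\min\{F(Y_i),F(Y_{N(i)})\}$ is equal, so $\tfrac1{n^2}\sum_{i\ne j}\Cov(Z_i,Z_j)=\tfrac{n(n-1)}{n^2}\Cov(Z_1,Z_2)$ — a single covariance, no degree-sequence sum. It then suffices to show $Z_1,Z_2$ are asymptotically independent, for which one only needs $P(\{1,N(1)\}\cap\{2,N(2)\}\ne\emptyset)\to 0$. The events $\{N(1)=2\}$ and $\{N(2)=1\}$ each have probability $O(1/n)$, and $P(N(1)=N(2))\to 0$ follows from your own Lemma~\ref{lem:neighbor-convergence} plus the triangle inequality: if $N(1)=N(2)=k$ then $d(X_1,X_2)\le d(X_1,X_k)+d(X_2,X_k)$, forcing at least one of $d(X_1,X_{N(1)}),\,d(X_2,X_{N(2)})$ to exceed $\tfrac12 d(X_1,X_2)$; since the nearest-neighbor distances vanish while $d(X_1,X_2)>0$ a.s.\ (Assumption~\ref{ass:cont}(a)), this event is asymptotically null. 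Equivalently, $\ev\sum_k L_{k,n}^2 = n + n(n-1)P(N(1)=N(2))=o(n^2)$ — so the bound you wanted is already implied by the nearest-neighbor consistency lemma, with no heavy-tail control of the degree sequence required.
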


\cite{azadkia_simple_2021} have shown that in the finite-dimensional setup, almost sure convergence can be obtained in~\eqref{e:consistency} and Assumption~\ref{ass:cont} can be dropped. 
 A closer inspection of their paper reveals that the proof is crucially based on the fact that within a set of arbitrary points~${\mathcal{S}_n=\{x_1,\ldots, x_n\}\subset\mathbb{R}^p}$, an element~$x_i\in \mathcal{S}_n$ can be the nearest neighbor to at most~$k(p)$ points in~$\mathcal{S}_n$, where~$k(p)$ is some finite constant that is independent of~$n$. Formally, define the maximum degree~$L_n=L_n(\mathcal{S}_n):=\max_{1\leq~i\leq~n} L_{i,n}$, where~$L_{i,n}$ is the number of elements in~$\mathcal{S}_n\setminus x_i$ that have~$x_i$ as their nearest neighbor. Then
\begin{equation}\label{eq:nb-bound}
 L_n\leq~k(p)\quad \text{for all~$n\geq~1$}.
\end{equation}
For example, if we generate a sequence in~$\mathbb{R}$ then trivially~$L_n\leq~2$. In~$\mathbb{R}^2$ it is not hard to see that~$L_n\leq~6$. \cite{kabatjanski_bounds_1978} have shown that~$k(p)\leq~\mathrm{const}\times \gamma^p$ for some absolute constant~$\gamma>1$ and for all~$p\geq~1$.

\par In this paper, however, we are interested in an infinite-dimensional covariate space. Here we generally cannot bound~$L_n$ by a constant. Consider, for example, an orthonormal sequence of elements~$x_k$,~$k\geq~2$, in~$L^2(\mathcal{U})$, i.e.
\begin{equation}
    \int_\mathcal{U} x_k(u)x_\ell(u)du=\delta_{k,\ell},
\end{equation}
where~$\delta_{k,\ell}$ denotes the Kronecker delta. Then it holds that~${d(x_k,x_\ell)=\sqrt{2}(1-\delta_{k,\ell})}$. If we set~$x_1=0$, then~$x_1$ is the nearest neighbor to all other elements, and thus~$L_n=n-1$. 
In this example, however big is~$n$,~$d(x_i,x_{N(i)})$ is not becoming small, and the heuristics that lead to \eqref{e:crucialapprox} are no longer applicable. This demonstrates that we need to explore and manage the order of magnitude of $L_n$ in a random sample in  infinite-dimensional spaces—an aspect that appears to be entirely unexamined in the literature. For insightful discussions on the problematic usage of nearest neighbor methods in high dimension, we refer to \cite{beyer_when_1997} and \cite{durrant_when_2009}.

\par Let us illustrate this problem on the data example that will be presented in Section~\ref{s:real-data}. These data consist of the age distributions of~$n=2117$ Austrian municipalities. In Figure~\ref{fig:agecluster} we see one curve which turns out to be the nearest neighbor of 66 other curves, which corresponds to~$\approx 3\%$ of the sample size. 
The example confirms that our theoretical issue is also relevant in practice and that we may get nearest neighbor graphs with some rather large degrees.
\begin{figure}[h]
     \centering
     \includegraphics[width=10cm]{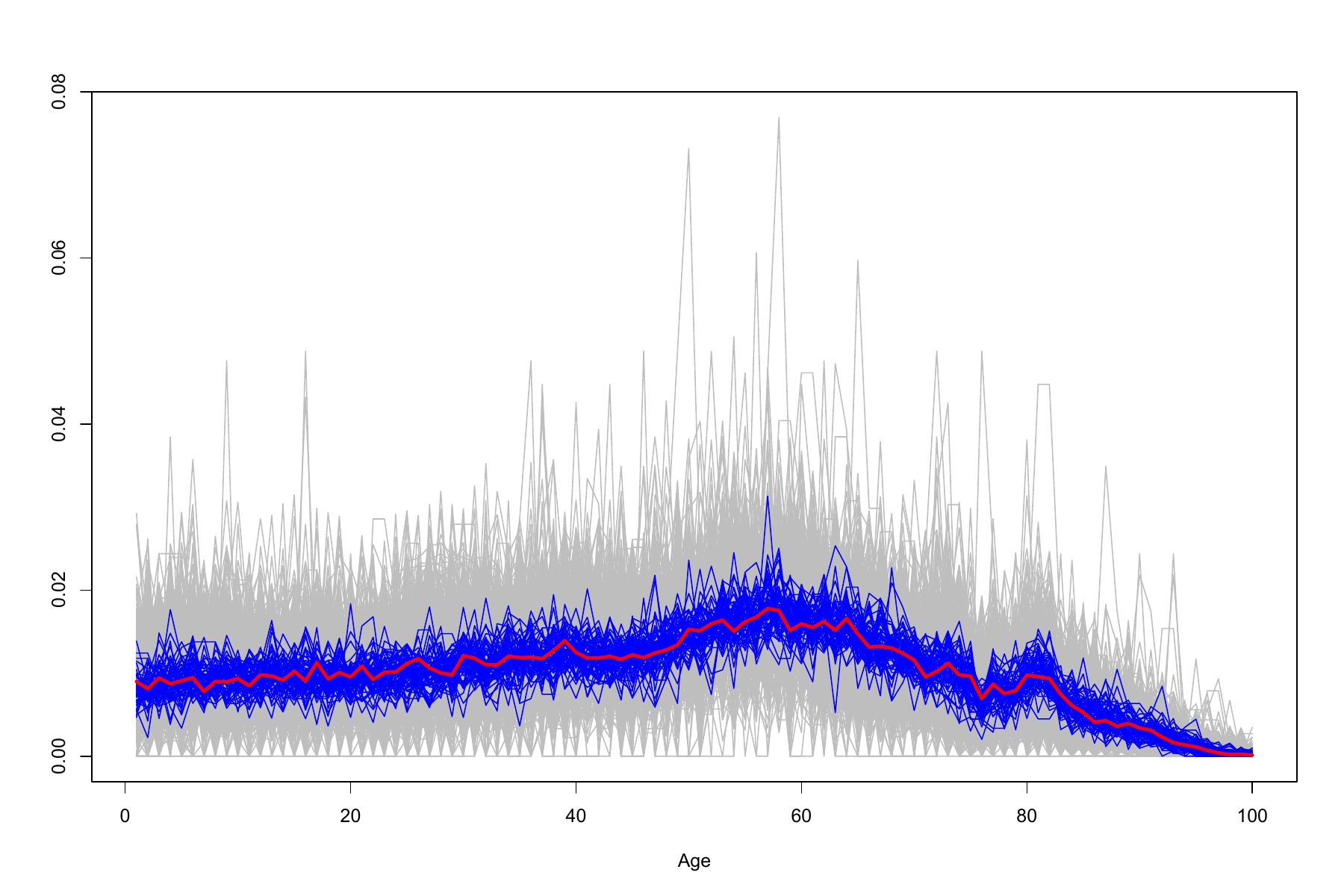}
     \caption{Age distribution curves for 2117 municipalities in Austria. On the~$y$-axis we see the proportions. The curve related to Wolfsberg (solid red) is the nearest neighbor of the age distribution curves of 66 municipalities (blue). }
     \label{fig:agecluster}
\end{figure}

\section{Main results}\label{s:main-results}

The data points from the counterexample in Section~\ref{s:estimator} constitute an infinite series of orthogonal functions, leading to a set of curves that differs significantly from of a random sample. In our next result we demonstrate that $L_n$ may diverge at a polynomial rate even in random samples. To this end, we consider random elements~$X_i$ taking values in~$L^2([0,1])$. The space~$L^2([0,1])$ is equipped with the inner product
\begin{equation}
    \langle x,y\rangle=\int_0^1 x(u)y(u)du
\end{equation}
and the corresponding norm~$\|x\|^2=\langle x, x\rangle$, which gives rise to the distance~${d(x,y)=\|x-y\|}$.
Let~$\Sigma =\mathrm{Var}(X_1)$ be the covariance operator of~$X_1$, that is
\begin{equation}
\Sigma(v)=\ev (X_1-EX_1)\langle X_1-EX_1,v\rangle
\end{equation}
and denote by~$\lambda_1\geq~\lambda_2\geq~\cdots$ its eigenvalues and by~$e_1, e_2,\ldots$ corresponding eigenfunctions.
Moreover, recall that a function~$\ell(x)$ is called slowly varying at~$\infty$  if~${\ell(y x)/\ell(x)\to 1}$ for any~$y>0$ and~$x\to\infty$. We write~$\ell \in R_0$. It is a well known fact that~$\ell\in R_0$ implies that~$\ell(x)=o(x^\delta)$ for any~$\delta>0$. 
We use the notion~$a_n=\omega(b_n)$ if~$|a_n/b_n|\to\infty$.

\begin{Theorem}\label{thm:growing-degree}
 Let~$\Sigma$ be a symmetric, positive semidefinite operator on~$L^2([0,1])$ having eigenvalues~$\lambda_k$. Suppose that~$\Sigma$ is trace-class, i.e.,~$\sum_{k\geq~1}\lambda_k<\infty$. Then there is a random variable~$X\in L^2([0,1])$ with covariance operator~$\mathrm{Var}(X)=\Sigma$, such that for a random sample~$X_1,X_2,\ldots~$ with~$X_i\stackrel{\text{iid}}{\sim} X$ the following holds:
 \begin{enumerate}
  \item[(i)] If~$\lambda_k=o\left(\sum_{j\geq~k}\lambda_j\right)$, then  ~$L_n=L_n(X_1,\ldots, X_n)\to\infty$ in probability.
    \item[(ii)] If~$\lambda_k=\ell(k) k^{-a}$ with~$\ell\in R_0$, then~$L_n = \omega_P\Big(h(n)n^{1/({2a-1})}\Big)$ for some~$h\in R_0$.
  \item[(iii)] Under the same condition as in (ii), it holds that such that~${L_n = o_P(h(n)n^{2/(2a-1)})}$ for some~${h\in R_0}$.
 \end{enumerate}
\end{Theorem}

For example, if we choose $\Sigma$ to be the covariance operator of a standard Brownian motion, then $\Sigma$ is a kernel operator with kernel $c(u,v)=\min\{u,v\}$.
It is well known that the eigenvalues of $\Sigma$ decay at the rate $k^{-2}$. Our Theorem~\ref{thm:growing-degree} now implicates, that we can find a functional random sample with covariance function $c(u,v)$, such that $L_n$ diverges at some polynomial rate lying between $n^{1/3}$ and $n^{2/3}$.

\begin{Remark}\label{remark:knn-mst}
   It is easy to see from the proof of Theorem~\ref{thm:growing-degree}, that in our construction of the random sample the maximum degree $L_n$ diverges even faster if, instead of the $1$-nearest neighbor graph, one uses $k$-nearest neighbor graphs with $k>1$ (possibly growing) or minimum spanning trees.
\end{Remark}

Assuming that~$X$ has a density and is independent of~$Y$, \cite{shi_power_2022} have shown that~$\sqrt{n}\widehat T_n$ will be asymptotically normally distributed. The necessity for~$X$ to possess a density function serves as an initial indication that their result does not directly generalize to infinite-dimensional~$X$. Also, the limiting variance constitutes a rather involved expression and crucially depends on the geometry of the Euclidean space. It is deduced from the results of \cite{henze_fraction_1987} and involves the limiting law of~$L_{1,n}$. Inspired by \cite{deb_measuring_2020} and \cite{lin_limit_2022}, we overcome the latter problem, by using a data-dependent self-normalization. 

An important message learned from Theorem~\ref{thm:growing-degree} is that we should be allowing $L_n$ to diverge at some polynomial rate. We are imposing the following high-level assumption.  
\begin{Assumption}\label{ass2}
We have~$L_n=o_P(n^{1/4})$.
\end{Assumption}
Before we discuss Assumption~\ref{ass2} in more detail, we will state our next result.

\begin{Theorem}\label{thm:normality}
Let Assumptions~\ref{ass:cont} and~\ref{ass2} hold. Assume that~$X_i$ and~$Y_i$ are independent. Then there is a random variable~$W_n=W_n(X_1,\ldots,X_n)$ such that
\begin{equation}
\sqrt{\frac{n}{36\,W_n}}\widehat T_n\convd N(0,1),\quad n\to\infty.
\end{equation}
The variable~$W_n$ is explicitly defined in~\proofreference{\eqref{e:Wn}}{(7) in the supplement}.
\end{Theorem}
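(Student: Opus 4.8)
\emph{Strategy.} The plan is to condition on the covariates and to use that, under the independence assumption, the nearest neighbor graph $\mathcal G_n$ is independent of $(Y_1,\dots,Y_n)$. Since $F$ is continuous, there are a.s.\ no ties, so the rank vector $(R_1,\dots,R_n)$ of $Y_1,\dots,Y_n$ is a uniformly distributed permutation of $\{1,\dots,n\}$ independent of $\mathcal G_n$, and $F_n(Y_i)=R_i/n$. Hence $\widehat T_n=\tfrac{6}{n^{2}}S_n-2$ with $S_n:=\sum_{i=1}^{n}\min\{R_i,R_{N(i)}\}$, and a direct computation gives $\ev[S_n\mid\mathcal G_n]=n(n+1)/3$, so that $\ev[\widehat T_n\mid\mathcal G_n]=2/n$. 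I would take $W_n$ (the quantity appearing in \eqref{e:Wn}) to be $n^{-3}\Var(S_n\mid\mathcal G_n)$; the ranks being distribution free, this is a deterministic function of $\mathcal G_n$ alone and is readily computable. With this choice
\begin{equation*}
\sqrt{n/(36W_n)}\,\widehat T_n=\frac{S_n-\ev[S_n\mid\mathcal G_n]}{\sqrt{\Var(S_n\mid\mathcal G_n)}}+\frac{1}{3\sqrt{nW_n}},
\end{equation*}
so the theorem reduces to proving (i) the self-normalized sum on the right converges weakly to $N(0,1)$, and (ii) $W_n$ is bounded away from $0$ in probability, which makes the remainder term of order $(nW_n)^{-1/2}$ negligible.

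\emph{Conditional central limit theorem.} Conditionally on $\mathcal G_n$, I would embed the ranks into an i.i.d.\ uniform sample $U_1,\dots,U_n$ via $R_i=\#\{k\le n:U_k\le U_i\}$, which yields the representation
\begin{equation*}
S_n=\sum_{i,k=1}^{n}1\{U_k\le U_i\}\,1\{U_k\le U_{N(i)}\}.
\end{equation*}
Conditionally on $\mathcal G_n$ this is a sum of $n^{2}$ summands bounded by $1$ whose dependency graph has maximum degree of order $n(1+L_n)$: the index $(i,k)$ only involves the variables $U_i,U_k,U_{N(i)}$, and each fixed vertex occurs as $N(i')$ for at most $L_n$ indices $i'$. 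Applying a Stein-type Berry--Esseen bound for sums with local dependence conditionally on $\mathcal G_n$, and using the variance estimate $\Var(S_n\mid\mathcal G_n)\asymp n^{3}$ from the next step, gives an error of order $(1+L_n)^{2}n^{-1/2}$, which by Assumption~\ref{ass2} is $o_P(1)$. A conditional CLT whose error bound tends to $0$ in probability transfers, via bounded convergence applied to the conditional distribution functions, to the unconditional statement (i).

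\emph{Variance bounds.} It remains to control $\Var(S_n\mid\mathcal G_n)$. A Hoeffding decomposition in the $U_k$, together with $\sum_j L_{j,n}=n$ and $\sum_j L_{j,n}^{2}\le nL_n$, yields the upper bound $\Var(S_n\mid\mathcal G_n)=O\big(n^{3}(1+L_n)\big)$, hence $W_n=O_P(1+L_n)$, which is harmless for the conclusion. The lower bound $\Var(S_n\mid\mathcal G_n)\ge c\,n^{3}$ for some universal $c>0$ and \emph{every} admissible graph (all out-degrees equal $1$, in-degrees arbitrary and summing to $n$) is the crux. From $\min\{a,b\}=\tfrac12(a+b-|a-b|)$ one gets $S_n=\text{const}+\tfrac12\sum_j L_{j,n}R_j-\tfrac12\sum_i|R_i-R_{N(i)}|$, and when all in-degrees equal $1$ (for instance when the map $N$ is an involution or a single cycle) the first-order H\'ajek projection of $S_n$ onto any single rank vanishes; the non-degeneracy of the limiting variance can therefore not be obtained from a projection argument but must be extracted from a direct second-moment computation that keeps track of the edge structure. \textbf{This is the main obstacle:} one must show that, regardless of the geometry of the nearest neighbor graph, enough genuinely fluctuating contributions survive so that $\Var(S_n\mid\mathcal G_n)$ grows like $n^{3}$. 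Once this and the conditional CLT are available, the reduction in the first step completes the proof.
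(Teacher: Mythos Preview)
Your strategy is sound and your Rinott calculation checks out: with $n^2$ summands, dependency degree $O(n(1+L_n))$ and $B=O(n^{-3/2})$ (given the variance lower bound), the Berry--Esseen error is $O((1+L_n)^2 n^{-1/2})=o_P(1)$ under Assumption~\ref{ass2}. This is, however, \emph{not} the route the paper takes. The paper works with the unranked statistic $\widetilde Q_n=\frac1n\sum_i\min\{F(Y_i),F(Y_{N(i)})\}$, whose summands involve only two uniforms each, so Rinott applies with $n$ summands and degree $2L_n$; the conditional variance $\widetilde W_n$ is computed in closed form and shown to satisfy $\widetilde W_n\ge 1/10$ for \emph{every} nearest-neighbor graph. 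The passage from $\widetilde Q_n$ to $\widehat Q_n$ is then handled separately: via a KMT Brownian-bridge coupling one shows $\sqrt n(\widehat Q_n-\widetilde Q_n)\approx A_n\sim N(0,4/45)$, where $A_n$ depends only on the order statistics and is therefore independent of $\widehat Q_n$ (which depends only on ranks and $\mathcal G_n$); a characteristic-function argument then ``subtracts'' $A_n$ and yields the CLT for $\widehat Q_n$ with normalizer $W_n\sim\widetilde W_n-4/45$. The payoff of the paper's detour is precisely that it trivializes what you call the main obstacle: the deterministic bound $\widetilde W_n\ge 1/10$ immediately gives $W_n\ge 1/90+o_P(1)$. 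Your direct approach is cleaner conceptually and avoids the Brownian-bridge machinery, but to close it you should not leave the lower bound as an obstacle: the exact formula~\eqref{e:Wn} together with the identity $W_n=\widetilde W_n-4/45+O_P(L_n/n)$ and the elementary bound $\widetilde W_n\ge 1/10$ (which follows from $\sum_i L_{i,n}^2\ge n$ and $f_n\ge 0$) gives $W_n\ge 1/90+o_P(1)$, so the non-degeneracy you worry about is in fact a two-line consequence of the second-moment computation you already propose.
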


\begin{Remark}\label{remark:deb-lin}
 Both, \cite{deb_measuring_2020} and \cite{lin_limit_2022}, have obtained a CLT for~$\widehat T_n$ (or generalizations). Although both results are impressive in their generality, they are in general not applicable in our setting: Assumption~(A3) in \cite{deb_measuring_2020} implies that~$L_n$ is bounded in case of nearest neighbor graphs. But even if other graph functionals are used, which might allow for growing~$L_n$, this growth is limited only to a polylogarithmic rate, which may be limiting as indicated by our Theorem~\ref{thm:growing-degree}. \cite{lin_limit_2022} obtain normality also under dependence of~$Y$ on~$X$, but their result is based on the nearest neighbor CLT by \cite{chatterjee_new_2008}, which requires~$L_n$ to be bounded by a constant.
\end{Remark}

If the target is to test
\begin{equation}
\text{
$\mathcal{H}_0\colon Y_i$ and~$X_i$ are independent\quad v.s.\quad~$\mathcal{H}_A\colon \mathcal{H}_0$ doesn't hold,
}
\end{equation}
we can use the test statistics~$\mathcal{I}_n:=\sqrt{n/({36\,W_n})}\widehat T_n$. If~$n$ is sufficiently large, we reject at significance level~$\alpha$ if~$\mathcal{I}_n>z_{1-\alpha}$, where~$z_\alpha$ is the~$\alpha$-quantile of a standard normal variable. The following corollary shows that under our assumptions this test is universally consistent.

\begin{Corollary}\label{cor:consistenttest}
Let Assumptions~\ref{ass:cont} and~\ref{ass2} hold. Assume that~$X_i$ and~$Y_i$ are not independent. Then
$\mathcal{I}_n\to\infty$ in probability as~$n\to\infty$.
\end{Corollary}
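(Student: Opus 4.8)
The plan is to combine the consistency of $\widehat T_n$ established in Theorem~\ref{thm:consistency} with a rough upper bound on the self-normalizer~$W_n$. Since Assumption~\ref{ass:cont} is assumed, and since under the alternative the population characterization of~$T$ recalled in the Introduction (following \cite{codec}) gives $T(X,Y)=\tau$ for some $\tau\in(0,1]$, Theorem~\ref{thm:consistency} yields $\widehat T_n\convP\tau>0$. In particular $P(\widehat T_n>\tau/2)\to1$, so the whole question reduces to showing that the factor $\sqrt{n/(36\,W_n)}$ tends to infinity in probability, i.e.\ that $W_n=o_P(n)$.

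To control~$W_n$ I would return to its explicit definition in~\eqref{e:Wn}. The quantity is built from averages of functionals that are uniformly bounded, in which each sample index~$i$ enters a number of times governed by its in-degree~$L_{i,n}$ in the nearest neighbor graph~$\mathcal G_n$. Since $\sum_{i=1}^n L_{i,n}=n$ and $\sum_{i=1}^n L_{i,n}^2\le L_n\sum_{i=1}^n L_{i,n}=nL_n$, a direct estimate of this type produces a bound of the form $W_n\le C(1+L_n)$ with an absolute constant~$C$. By Assumption~\ref{ass2} we then get $W_n=O_P(L_n)=o_P(n^{1/4})$, hence $W_n/n\convP0$. An alternative, essentially bookkeeping-free argument uses that $W_n=W_n(X_1,\dots,X_n)$ is a functional of the covariate sample only: its law does not depend on whether $X$ and $Y$ are independent, so the bound $W_n=o_P(n)$ transfers verbatim from the analysis carried out for Theorem~\ref{thm:normality}.

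With both ingredients at hand the conclusion follows quickly. Fix an arbitrary $K>0$ and pick $M>0$ large enough that $\sqrt{M/36}\,(\tau/2)>K$. On the event $A_n:=\{\widehat T_n>\tau/2\}\cap\{W_n<n/M\}$ we have
\[
\mathcal I_n=\sqrt{\frac{n}{36\,W_n}}\;\widehat T_n>\sqrt{\frac{M}{36}}\cdot\frac{\tau}{2}>K.
\]
Because $P(\widehat T_n>\tau/2)\to1$ and, for this fixed~$M$, $P(W_n<n/M)\to1$, we have $P(A_n)\to1$, whence $P(\mathcal I_n>K)\to1$. Since $K$ was arbitrary, $\mathcal I_n\to\infty$ in probability.

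The only genuinely delicate point in this scheme is the control of~$W_n$ under dependence of $X$ and $Y$: one must be certain that the data-driven normalization cannot blow up faster than~$n$. This is precisely where Assumption~\ref{ass2} is used — it is what makes $W_n=o_P(n)$ available — and it is convenient that $W_n$ involves the covariate sample alone, so that no independence of $X$ and $Y$ is needed for this estimate. Everything else is routine.
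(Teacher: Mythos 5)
The paper leaves Corollary~\ref{cor:consistenttest} without a written proof, so there is nothing to compare against, but your argument is the obvious intended one and it is correct. The two ingredients are exactly right: Theorem~\ref{thm:consistency} gives $\widehat T_n\convP T(X,Y)=\tau>0$ (the strict positivity being the characterization of $T$ from \cite{codec} recalled in the introduction), and $W_n=o_P(n)$ because $W_n$ is a function of $\mathcal G_n$ alone and Lemma~\ref{lem:q-tilde-variance} together with \eqref{e:Wn}, $\sum_i L_{i,n}=n$ and $\sum_i L_{i,n}^2\le nL_n$ give a bound of order $1+L_n=o_P(n^{1/4})$. One small caveat on the ``bookkeeping-free'' aside: the proof of Theorem~\ref{thm:normality} never records an upper bound on $W_n$ explicitly, so the estimate does not literally transfer verbatim; one still has to extract it from the variance formulas as you do in your first argument. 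That is a cosmetic remark only, not a gap.
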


As presented in \simulationreference{Appendix~\ref{s:empirical}}{the supplement to this paper}, the suggested test for independence shows good power against fixed alternatives in practice. However, the lack of power against local alternatives as mentioned above remains a potential issue.  A notable advantage is the pivotal limit of our statistic under the null. If the data set is large, the test not only gains accuracy, but it also remains very fast to execute. Presumably more powerful tests, like those based on distance correlation and ball covariance have a non-pivotal asymptotic and require numerically intensive resampling procedures. For a more detailed discussion on this matter we refer to \simulationreference{Appendix~\ref{s:empirical}}{the supplemental material  \citep{hormann_supplement_2024}}.

While Assumption~\ref{ass2} already constitutes a strong relaxation of conditions imposed on the maximal degree of the nearest neighbour graph in other papers, we are still lacking a general class of functional models for which this condition can be verified. Our last result shows that Assumption~\ref{ass2} holds under some general conditions.  To this end, we need another assumption, which holds e.g.\ for Gaussian processes.
\begin{Assumption}\label{ass:density}
The scores~$Z_{1k}:=\langle X_1,e_k\rangle$,~$k\geq~1$, are independent and have density functions~$f_{k}(s)$. Moreover,~$f_1(s)$ and~$f_2(s)$ are uniformly bounded over~$s\in\mathbb{R}$.
\end{Assumption}

\begin{Theorem}\label{thm:ass-2-fulfilled}
Let Assumption~\ref{ass:density} hold and assume that the eigenvalues~$\lambda_k$ of the covariance operator~$\mathrm{Var}(X)$ satisfy~${\sum_{j\geq~q}\lambda_j=O(\lambda_q)}$ and~${\lambda_{q_n}=o(n^{-6})}$ when~${q_n=\lfloor 0.899\log n\rfloor}$. Then Assumption~\ref{ass2} is fulfilled.
\end{Theorem}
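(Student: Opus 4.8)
The plan is to reduce the verification of Assumption~\ref{ass2}, i.e. $L_n = o_P(n^{1/4})$, to a statement about how many sample points can share a common nearest neighbor, and to control this by a covering/packing argument in the finite-dimensional score space spanned by the first $q_n$ eigenfunctions, absorbing the tail $\sum_{j\ge q_n}\lambda_j$ into a negligible error via Markov's inequality. Concretely, I would first split each $X_i$ as $X_i = X_i^{(q)} + R_i^{(q)}$, where $X_i^{(q)}=\sum_{k\le q}Z_{ik}e_k$ is the projection onto the span of $e_1,\dots,e_{q}$ with $q=q_n=\lfloor 0.899\log n\rfloor$, and $R_i^{(q)}$ is the remainder. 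Since $\ev\|R_i^{(q)}\|^2=\sum_{j\ge q}\lambda_j = O(\lambda_q) = o(n^{-6})$, a union bound gives that, with probability tending to one, $\max_{i\le n}\|R_i^{(q)}\| \le n^{-2}$ (say), so all distances $d(X_i,X_j)$ agree with the finite-dimensional distances $d(X_i^{(q)},X_j^{(q)})$ up to an additive $2n^{-2}$. On this high-probability event, if $X_i$ is the nearest neighbor of many points, those points' projections all lie in a ball around $X_i^{(q)}$ whose radius is comparable to the relevant nearest-neighbor distance, and a classical covering argument in $\R^{q}$ (the same one underlying the bound $k(p)\le \mathrm{const}\cdot\gamma^p$ of \cite{kabat}) shows that a point can be the approximate nearest neighbor of at most $C\gamma^{q}$ others, up to a correction term accounting for the $n^{-2}$ slack.

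The second step is to make the "approximate" qualifier precise. The subtlety is that a point which is \emph{not} the true nearest neighbor of $X_j$ could still become its nearest neighbor after the $R_i^{(q)}$-perturbation, or conversely; but since the perturbation is uniformly below $n^{-2}$, this can only happen if two of the candidate distances from $X_j$ are within $4n^{-2}$ of each other. Under Assumption~\ref{a:density}, the scores $Z_{1k}$ have bounded densities for $k=1,2$, so the pairwise distances $d(X_i^{(q)},X_j^{(q)})$ have a distribution with no atoms and, more quantitatively, the probability that any two of the $\binom{n}{2}$ pairwise distances fall within $4n^{-2}$ of one another is $o(1)$ — this is where uniform boundedness of $f_1,f_2$ is used, to get an anti-concentration bound on squared differences of scores in the first two coordinates. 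Hence with probability $\to 1$ the nearest-neighbor graph of the full sample coincides with that of the projected sample, and $L_n \le C\gamma^{q_n}$ on this event.

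It remains to check that $C\gamma^{q_n} = o(n^{1/4})$ for $q_n=\lfloor 0.899\log n\rfloor$. Since the Euclidean bound from \cite{kabat} is $k(q)\le \mathrm{const}\cdot\gamma^q$ with an explicit $\gamma$ (numerically around $\gamma\approx 1.32$, so $\log\gamma < 0.28$), we have $\gamma^{q_n} \le \exp(0.899\log n\cdot\log\gamma) = n^{0.899\log\gamma}$, and $0.899\log\gamma < 0.899\cdot0.28 < 1/4$; so indeed $L_n = o_P(n^{1/4})$, which is exactly Assumption~\ref{ass2}. The choice of the constant $0.899$ in the statement is thus dictated by requiring $0.899\cdot\log\gamma < 1/4$ for the relevant $\gamma$, and the constraint $\lambda_{q_n}=o(n^{-6})$ is what powers the tail bound $\max_i\|R_i^{(q)}\|\le n^{-2}$ after a union bound over $n$ points; the exponent $6$ gives comfortable room (one needs the tail to beat $n\cdot n^{4}$ after squaring and Markov, i.e. $n^{-5}$ would already suffice, so $n^{-6}$ is safe).

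The main obstacle I anticipate is the second step: cleanly controlling the event that the perturbation $R_i^{(q)}$ changes the combinatorial structure of the nearest-neighbor graph. One must show that, with high probability, no point $X_j$ has two near-ties among its distances $\{d(X_j^{(q)},X_i^{(q)})\}_{i\ne j}$ at scale $n^{-2}$. This requires an anti-concentration estimate for $d(X_j^{(q)},X_i^{(q)})^2 - d(X_j^{(q)},X_{i'}^{(q)})^2$ that is uniform over the (random) configuration of the other points, and this is precisely why Assumption~\ref{a:density} is formulated with \emph{bounded} densities only in the first two coordinates — conditioning on coordinates $3,\dots,q$ and on $X_{i}^{(q)},X_{i'}^{(q)}$, the difference of squared distances is an affine function of $(Z_{j1},Z_{j2})$ with a coefficient vector that is non-degenerate with probability one, so its density is bounded by a constant times $\max(\|f_1\|_\infty,\|f_2\|_\infty)$, and the near-tie probability is $O(n^{-2})$ per pair of indices, hence $O(n^{3}\cdot n^{-2})=O(n)$ — which is \emph{not} $o(1)$. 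This forces a more careful two-level argument: first restrict to the (random but small) set of indices $i$ that are plausible nearest-neighbor candidates for $X_j$, namely those at distance $O(\text{typical NN distance})$, of which there are only polylog-many with high probability by a separate moment computation, and only then apply the anti-concentration bound, bringing the total down to $o(1)$. Getting the bookkeeping of this localization right, rather than the covering estimate or the tail bound, is the delicate part.
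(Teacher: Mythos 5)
Your scaffold matches the paper's: project onto the first $q_n=\lfloor 0.899\log n\rfloor$ score coordinates, control the tail via $\lambda_{q_n}=o(n^{-6})$, show the projected and full-sample maximal degrees agree with high probability, then apply the deterministic Euclidean degree bound of \cite{kabat}, with exactly the $0.899\cdot\log\gamma<1/4$ arithmetic you give (this is Proposition~\ref{prop:general-condition-clt} combined with \eqref{eq:nb-prob}). You also correctly place the whole difficulty in showing that the tail perturbation does not change the relevant nearest-neighbor relations, and correctly see that Assumption~\ref{a:density} is there to supply an anti-concentration estimate. However, your treatment of that central step has a genuine gap, and the repair you sketch does not work.

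You require that \emph{no} pair of projected distances from any $X_j^{(q)}$ be near-tied at scale $n^{-2}$, obtain an $O(n^{-2})$ anti-concentration bound per pair, and so correctly land on an $O(n^3\cdot n^{-2})=O(n)$ failure probability, which is useless. Your proposed fix is to localize to ``polylog-many'' plausible nearest-neighbor candidates, but that claim is false: in $\R^q$ with $q\asymp\log n$ and a density bounded near $X_j^{(q)}$, the expected number of points within $C$ times the nearest-neighbor distance scales like $C^q=n^{\Theta(\log C)}$ for $C>1$, a positive power of $n$, not polylog. The observation you are missing is that for each $j$ only \emph{one} spacing is relevant: whether the smallest and second-smallest of the conditionally i.i.d.\ quantities $\|X_j^{(q)}-X_i^{(q)}\|^2$, $i\ne j$, are separated by more than the tail perturbation. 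The paper's Lemma~\ref{l:4} bounds the density of this first spacing $V_{(2)}-V_{(1)}$ by $n^2\sup_t g_{x,q}(t)$ via Pyke's spacing formula, controls $\sup_t g_{x,q}(t)$ uniformly in $x$ and $q$ through Lemmas~\ref{l:1}--\ref{l:3} (precisely where boundedness of $f_1,f_2$ enters), and hence gets $P(V_{(2)}-V_{(1)}\le\epsilon)\le C\,n^2\epsilon$ — a single union bound over the $n$ choices of $j$ away from what is needed, rather than over $n^3$ triples. The paper also needs only the one-sided inclusion $\{L_{1,n}>L_{1,n}^q\}\subset\{\exists i\colon \|X_i-X_{N(i|q)}\|^2>\|X_1-X_i\|^2\}$, not full preservation of the graph; combining this with a Markov bound on $\ev\|\Pi^q(X_1-X_{N(1|q)})\|^2=2\sum_{k>q}\lambda_k$ and optimizing over the truncation level $\epsilon$ yields Proposition~\ref{p:Ln}, $P(L_{1,n}>L_{1,n}^q)\le Cn^2\bigl(\sum_{j>q}\lambda_j\bigr)^{1/2}$, from which \eqref{eq:nb-prob} gives the $n^{-6}$ requirement. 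Substituting the first-spacing bound for your all-pairs count would repair your argument (and would even vindicate your side remark that $o(n^{-5})$ suffices with $\delta=n^{-2}$), but as written the proof is incomplete at its key step.
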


The proof of Theorem~\ref{thm:ass-2-fulfilled} will follow from our general but slightly more technical Proposition~\proofreference{\ref{prop:general-condition-clt} in Section~\ref{s:ass1}}{5 in the supplement}.

\section{Real data illustration}\label{s:real-data}
This data set was previously studied in \cite{ofner_quantile_2021} in the context of functional quantile regression. We apply the procedures discussed to COVID-19 vaccination data from 2117 Austrian municipalities. Specifically, we consider for each municipality:
\begin{itemize}
 \item a curve~$X$ representing the population's age distribution on~01.01.2021. This data is provided by \cite{statistik_austria_bevolkerung_2021}.
 \item the proportion~$Y$ of the population who had received at least two COVID-19 vaccinations by 13.10.2021. This data is provided by the \cite{bundesministerium_fur_soziales_gesundheit_pflege_und_konsumentenschutz_bmsgpk_covid-19_2021}.
\end{itemize}
We will compare our statistic~$\widehat T_n$ and the corresponding independence test~$\mathcal{I}_n$ to permutation tests based on distance correlation~($\widehat R_n,~\mathcal{I}^\mathrm{DC}_n$) and ball correlation ($\widehat B_n,~\mathcal{I}^\mathrm{BC}_n$), as well as to an independence test for functional data developed by \cite{garcia-portugues_goodness--fit_2014}~($\mathcal{I}^\mathrm{CvM}_n$). The latter is a bootstrap test utilizing a Cram\'er-von-Mises type statistic. We refer to \cite{szekely_measuring_2007}, \cite{pan_ball_2020} and \cite{garcia-portugues_goodness--fit_2014} for details on these competing approaches.
\par Figure~\ref{fig:vaccination-data} shows the entire sample of age curves. As it is common in FDA, the curves are only observable on a discrete set of observation points (in our case,~100 points). Since there is no measurement error associated with the age curves, we have omitted a preprocessing and smoothing step. From visual inspection, there is no obvious connection between age structure and vaccination rate.
\begin{figure}[h!]
\centering
 \includegraphics[width=0.8\textwidth]{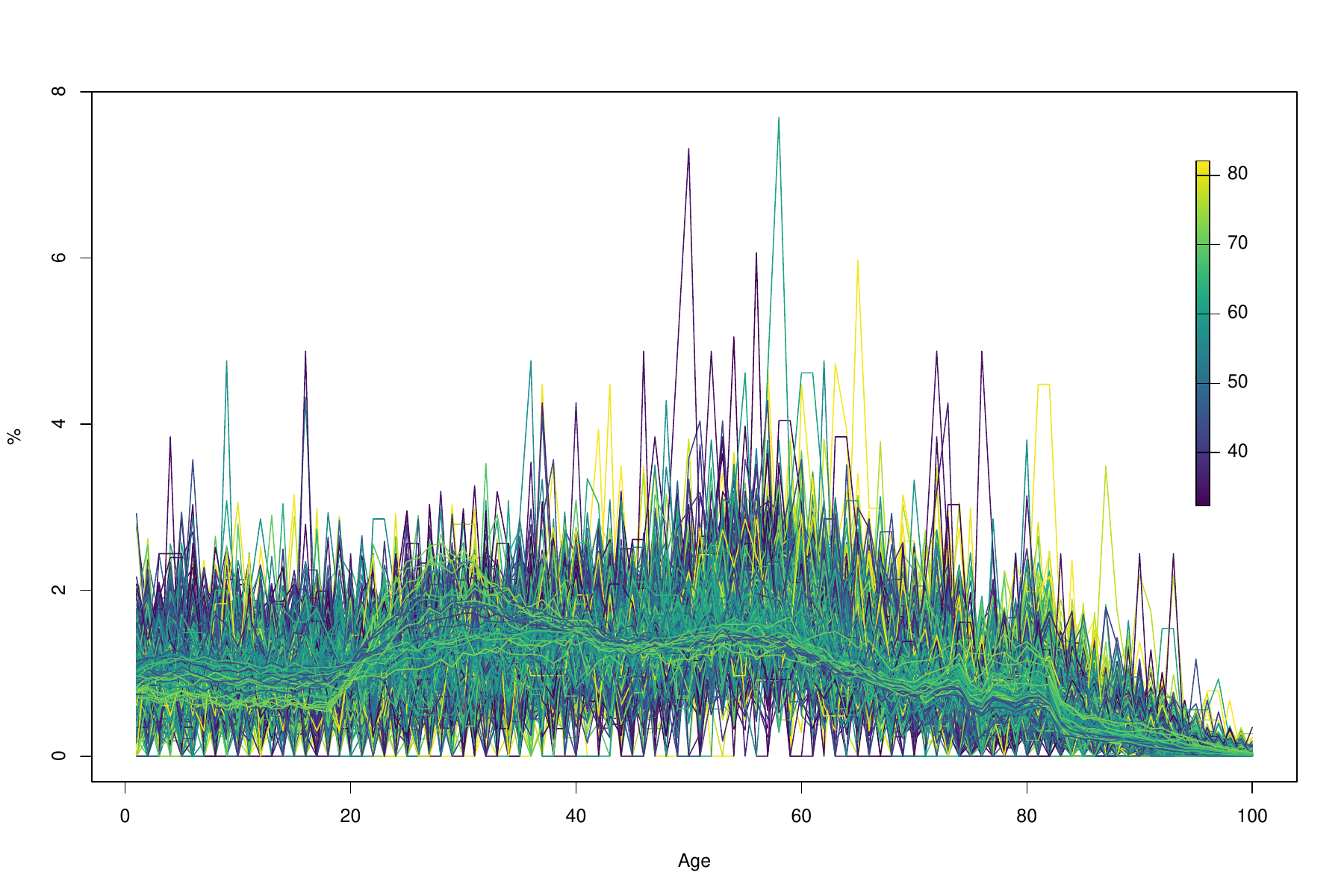}
\caption{Age curves are colored according to the vaccination rates of the corresponding municipalities.}\label{fig:vaccination-data}
\end{figure}
However, the values of~$\widehat T_n=~0.31$ and~$\widehat R_n=0.43$ indicate a clear relationship. Interestingly, the value~$\widehat B_n=0.01$ is very small. Generally, we observed in diverse simulation experiments, that ball correlation yields rather small values even under strong dependence, making it less attractive as a dependence coefficient. (Notice that ball correlation is scaled to remain between 0 and 1.) As the absolute values of these coefficients cannot be directly compared, we perform the corresponding independence tests, including also the~$\mathcal{I}^\mathrm{CvM}_n$ test. The~$p$-values returned by~$\mathcal{I}_n$,~$\mathcal{I}^\mathrm{DC}_n$,~$\mathcal{I}^\mathrm{BC}_n$ and~$\mathcal{I}^\mathrm{CvM}_n$ are~$3\times 10^{-4}\%, 0.02\%, 1\%$ and~$2\times 10^{-14}\%$, respectively. For the permutation and the bootstrap test, a number of~5000 repetitions was used. Hence, all four tests indicate a highly significant connection between the age structure and the vaccination rate. An interesting observation is that~$\mathcal I_n^\mathrm{CvM}$ is sensitive to changes in the dataset: for example, when only the 598 municipalities with at least 3000 inhabitants are considered,~$\mathcal{I}_n$,~$\mathcal I_n^\mathrm{DC}$ and~$\mathcal I_n^\mathrm{BC}$ give similar~$p$-values as before, but the~$p$-value of~$\mathcal I_n^\mathrm{CvM}$ rises to values between~$5\%$ and~$11\%$ (the variation arises from the test being randomized). Also note that for the full dataset, the computation of~$\mathcal I_n$ takes~0.36 seconds, the computation of~$\mathcal I_n^\mathrm{DC}$ takes~33.14 seconds, the one of~$\mathcal I_n^\mathrm{BC}$ takes~1:25 minutes and the computation of~$\mathcal I_n^\mathrm{CvM}$ takes~2:29 minutes.

\section{Proofs}\label{s:proofs}

\proofreference{\subsection{Proof of Theorem~\ref{thm:consistency}}}{\subsection{Proof of Theorem~1}}
We need some preparatory lemmas. Variants of Lemmas~\ref{lem:neighbor-convergence} and~\ref{lem:expectaton-convergence} are given by \cite{azadkia_simple_2021}, but we present them here for the sake of completeness.

\begin{Lemma}\label{lem:neighbor-convergence}
 Let~$X_i$ be random elements with values in a separable metric space. Let~$X_{N(1)}$ be the nearest neighbor of~$X_1$ among~$X_2,\ldots,X_n$. Then~$X_{N(1)}\convas X_1$ as~$n\to\infty$.
\end{Lemma}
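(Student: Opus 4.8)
The plan is to reduce the claim to showing that the distance from $X_1$ to the sample $\{X_2,X_3,\dots\}$ vanishes in the limit. First I would note that $M_n := d(X_1,X_{N(1)}) = \min_{2\le j\le n} d(X_1,X_j)$ is nonincreasing in $n$, hence converges to $M_\infty := \inf_{j\ge 2} d(X_1,X_j)$; it therefore suffices to prove $M_\infty = 0$ almost surely.

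To do this I would condition on $X_1$. Writing $P_X$ for the law of $X_1$ and $g(x,\varepsilon) := P_X(\{y : d(x,y)<\varepsilon\})$, independence of $X_1$ from the i.i.d.\ sequence $X_2,X_3,\dots$ gives, for fixed $\varepsilon>0$,
\[
P\big(M_\infty\ge \varepsilon \,\big|\, X_1 = x\big) \;=\; \prod_{j\ge 2} P\big(d(x,X_j)\ge \varepsilon\big) \;=\; \prod_{j\ge 2}\big(1-g(x,\varepsilon)\big),
\]
which vanishes as soon as $g(x,\varepsilon)>0$. The latter holds for every $\varepsilon>0$ whenever $x$ belongs to the topological support $S$ of $P_X$, since then every open ball centred at $x$ has positive $P_X$-mass; consequently $P(M_\infty\ge\varepsilon\mid X_1=x)=0$ for all $x\in S$.

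The one genuinely necessary use of separability comes next: I would argue that $P_X(S)=1$, by fixing a countable base $\{U_k\}$ of $(H,d)$, discarding those $U_k$ with $P_X(U_k)=0$ (their union is $P_X$-null by countable subadditivity), and observing that the complement of this null set is precisely $S$. Integrating the conditional probability above over $x\in S$ then yields $P(M_\infty\ge\varepsilon)=0$ for every $\varepsilon>0$, and letting $\varepsilon$ run through a sequence tending to $0$ gives $M_\infty=0$ a.s. Since $d(X_1,X_{N(1)})=M_n\downarrow M_\infty$, this is exactly $X_{N(1)}\convas X_1$.

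I expect the only delicate point to be the separability step: it is what guarantees that $P_X$ is carried by its support, and without it $M_\infty$ may stay bounded away from $0$ with positive probability — the same mechanism that drives the orthonormal-sequence counterexample in Section~\ref{s:estimator} at the level of a fixed, non-random configuration. By contrast, the monotonicity reduction, the conditioning step, and the vanishing of the infinite product are all routine. Possible ties in the definition of $N(1)$ are harmless, since $d(X_1,X_{N(1)})$ equals $M_n$ no matter how ties are resolved.
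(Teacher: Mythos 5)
Your proof is correct and rests on the same mechanism as the paper's: separability guarantees that $X_1$ almost surely lands in a set of positive $P_X$-mass, so the remaining i.i.d.\ sample must eventually produce points arbitrarily close to $X_1$. The paper packages this by covering $H$ with countably many $\varepsilon$-diameter balls and showing $P(d(X_1,X_{N(1)})>\varepsilon)\to 0$, leaving the upgrade to almost sure convergence implicit, whereas you route the argument through the topological support of $P_X$ and explicitly invoke the monotonicity of $M_n=d(X_1,X_{N(1)})$ to reduce to $M_\infty=0$ a.s., which makes that last step a bit more careful without changing the substance.
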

\begin{proof}
Let~$\varepsilon>0$. Due to separability, the space~$H$ can be covered by countably many balls of diameter~$\varepsilon$. By~$\sigma$-subadditivity of~$P$ some of those balls must have positive probability. Therefore, with probability one,~$X_1$ lies in a ball~$B$ which has positive probability. By the triangle inequality,
\begin{equation}
 P(d(X_1,X_{N(1)})>\varepsilon)\leq~P(X_2,\ldots,X_n\notin B)=(1-P(X_1\in B))^{n-1},
\end{equation}
which tends to zero, since~$P(X_1\in B)>0$.
\end{proof}

\begin{Lemma}\label{lem:expectaton-convergence}
 Let~$\widehat Q_n$ be defined as in~\proofreference{\eqref{e:estimatorQ}}{\lbrack 3\rbrack} and set
 \begin{equation}
     \widetilde{Q}_n:=\frac{1}{n}\sum_{i=1}^n \min\left\lbrace F(Y_i),F\left(Y_{N(i)}\right)\right\rbrace.
 \end{equation}
Then~${|\widehat Q_n-\widetilde Q_n|\convas 0}$ and~${|\ev\widehat Q_n-\ev\widetilde Q_n|\to 0}$.
\end{Lemma}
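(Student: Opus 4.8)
The plan is to exploit that the only difference between $\widehat Q_n$ and $\widetilde Q_n$ is the replacement of the true distribution function $F$ by the empirical distribution function $F_n$, so that the Glivenko--Cantelli theorem does essentially all the work. First I would record the elementary Lipschitz estimate
\begin{equation*}
|\min\{a,b\}-\min\{c,d\}|\le \max\{|a-c|,|b-d|\}\qquad (a,b,c,d\in\R),
\end{equation*}
which follows from a short case distinction on which of the four numbers attains the respective minimum. Applying it with $a=F_n(Y_i)$, $b=F_n(Y_{N(i)})$, $c=F(Y_i)$, $d=F(Y_{N(i)})$ gives, for every $i$,
\begin{equation*}
\bigl|\min\{F_n(Y_i),F_n(Y_{N(i)})\}-\min\{F(Y_i),F(Y_{N(i)})\}\bigr|\le \sup_{t\in\R}|F_n(t)-F(t)|=:D_n,
\end{equation*}
and averaging over $i=1,\dots,n$ yields $|\widehat Q_n-\widetilde Q_n|\le D_n$. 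The key point is that the right-hand side $D_n$ does not depend on the indices $i$ or $N(i)$; consequently the fact that the nearest-neighbor map $N$ is a function of the entire covariate sample $X_1,\dots,X_n$ (and hence that $Y_{N(i)}$ is not a fresh draw) is completely harmless.

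It then remains to invoke the classical Glivenko--Cantelli theorem for the i.i.d.\ real sample $Y_1,\dots,Y_n$, which gives $D_n\to 0$ almost surely; hence $|\widehat Q_n-\widetilde Q_n|\convas 0$. For the statement about expectations I would simply pass through the inequality, $|\ev\widehat Q_n-\ev\widetilde Q_n|\le \ev|\widehat Q_n-\widetilde Q_n|\le \ev D_n$, and use that $0\le D_n\le 1$ together with $D_n\to 0$ a.s.\ to conclude $\ev D_n\to 0$ by dominated convergence. This completes both assertions.

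I do not expect a genuine obstacle here: the lemma is a routine stability estimate, and the entire argument reduces to the termwise bound by $D_n$ plus Glivenko--Cantelli and bounded convergence. The only (minor) thing to be careful about is phrasing the comparison so that the bound $|\widehat Q_n-\widetilde Q_n|\le D_n$ is uniform over the summation index, which is exactly what neutralizes the dependence of $Y_{N(i)}$ on the sample; everything else is standard.
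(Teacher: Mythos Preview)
Your proposal is correct and follows essentially the same route as the paper: bound $|\widehat Q_n-\widetilde Q_n|$ by $\sup_{t\in\R}|F_n(t)-F(t)|$, invoke Glivenko--Cantelli for the almost sure convergence, and use dominated convergence (with the trivial bound $D_n\le 1$) for the expectations. The only difference is that you spell out the Lipschitz inequality for the minimum explicitly, whereas the paper simply asserts the resulting bound.
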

\begin{proof}

It is easily seen that
$ |\widetilde{Q}_n-\widehat Q_n| \leq~\sup_{t\in\R} |F(t)-F_n(t)|
$ and hence by the Glivenko-Cantelli Theorem, we have~$ |\widetilde{Q}_n-\widehat Q_n|\convas 0$. Since~$\sup_{t\in\R} |F(t)-F_n(t)|\leq~1$, dominated convergence yields~${\lim_{n\to\infty} \ev |\widetilde{Q}_n-\widehat Q_n|=0}$, which implies the second statement.
\end{proof}

\begin{Lemma}\label{lem:square-degree}
It holds that~$P(N(1)=N(2))=o(1)$.
\end{Lemma}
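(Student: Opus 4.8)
The plan is to reduce the event $\{N(1)=N(2)\}$ to a statement about distances and then invoke Lemma~\ref{lem:neighbor-convergence}. The key observation is that if $N(1)=N(2)$, then $X_{N(1)}$ and $X_{N(2)}$ are one and the same sample point, so the triangle inequality gives
\begin{equation*}
 d(X_1,X_2)\le d(X_1,X_{N(1)})+d(X_{N(1)},X_2)=d(X_1,X_{N(1)})+d(X_2,X_{N(2)}).
\end{equation*}
Writing $R_n:=d(X_1,X_{N(1)})+d(X_2,X_{N(2)})$ and $D:=d(X_1,X_2)$, this yields the inclusion $\{N(1)=N(2)\}\subseteq\{D\le R_n\}$, hence $P(N(1)=N(2))\le P(D\le R_n)$. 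Note that this bound handles all cases at once, regardless of which index the common nearest neighbor turns out to be.

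Next I would assemble the two ingredients. First, $R_n\to 0$ almost surely: Lemma~\ref{lem:neighbor-convergence} gives $X_{N(1)}\convas X_1$, and the very same argument, applied to the fixed point $X_2$ and the i.i.d.\ sample $\{X_1,X_3,\ldots,X_n\}$ of copies of $X$, gives $X_{N(2)}\convas X_2$; adding the two distances shows $R_n\convas 0$. Second, $D>0$ almost surely: since $H$ is a distribution function that vanishes on $(-\infty,0)$, its continuity (Assumption~\ref{ass:cont}(a)) forces $H(0)=P(d(X_1,X_2)=0)=0$.

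Finally I would combine these via a standard two-term split. For any $\delta>0$,
\begin{equation*}
 P(D\le R_n)\le P(R_n\ge \delta)+P(D\le \delta)=P(R_n\ge\delta)+H(\delta).
\end{equation*}
Given $\varepsilon>0$, I would first choose $\delta$ so small that $H(\delta)<\varepsilon$, which is possible because $H(\delta)\to H(0)=0$ as $\delta\downarrow 0$; then, since $R_n\convas 0$ implies $R_n\convP 0$, the term $P(R_n\ge\delta)$ tends to $0$ as $n\to\infty$. Thus $\limsup_n P(N(1)=N(2))\le\varepsilon$, and letting $\varepsilon\to 0$ finishes the proof.

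There is no genuine obstacle here; the only thing one must spot is the triangle-inequality bound, after which everything reduces to Lemma~\ref{lem:neighbor-convergence} together with the harmless non-atom property $P(d(X_1,X_2)=0)=0$. The single point needing a little care is that the neighbor map $N=N_n$ depends on $n$, so the convergence $R_n\to 0$ should be invoked as convergence in probability against the \emph{fixed} threshold $\delta$, rather than uniformly in $n$.
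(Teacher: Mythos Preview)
Your proof is correct and follows essentially the same route as the paper: both use the triangle inequality on the event $\{N(1)=N(2)\}$ to bound it by an event involving the nearest-neighbor distances, and then invoke Lemma~\ref{lem:neighbor-convergence} together with $P(d(X_1,X_2)=0)=0$ (from Assumption~\ref{ass:cont}(a)). The paper phrases the bound via $\max\{d(X_1,X_{N(1)}),d(X_2,X_{N(2)})\}\ge d(X_1,X_2)/2$ rather than your sum $R_n\ge D$, but this is merely a cosmetic difference.
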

\begin{proof}

Since~$|X_1-X_{N(1)}|\convas 0$ by Lemma~\ref{lem:neighbor-convergence}, we have
\begin{align}\begin{split}
     P(N(1)=N(2))&\leq\!P\left(|X_1-X_{N(1)}|\geq~\frac{|X_1-X_2|}{2}\lor |X_2-X_{N(2)}|\geq~\frac{|X_1-X_2|}{2}\right)\\
     &\leq\!P\left(|X_1-X_{N(1)}|\geq~\frac{|X_1-X_2|}{2}\right)\!+\!P\left(|X_2-X_{N(2)}|\geq~\frac{|X_1-X_2|}{2}\right)\\
     &=o(1).
\end{split}\end{align}

\end{proof}

\begin{Lemma}\label{le:var}
It holds that~$\mathrm{Var}(\widetilde Q_n)\to 0$.
\end{Lemma}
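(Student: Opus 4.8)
\subsection*{Proof plan for Lemma~\ref{le:var}}

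The plan is to exploit that the summands of $\widetilde Q_n$ are locally dependent with respect to the nearest neighbor graph, while never summing errors over its vertices, so that no control on $L_n$ is needed. Throughout we work under Assumption~\ref{ass:cont}. Write $h_i:=\min\{F(Y_i),F(Y_{N(i)})\}\in[0,1]$, so $\widetilde Q_n=\frac1n\sum_{i=1}^n h_i$. Since the pairs $(X_i,Y_i)$ are i.i.d.\ and the nearest neighbor map $i\mapsto N(i)$ is equivariant under permutations of the sample, the vector $(h_1,\dots,h_n)$ is exchangeable, whence
\begin{equation*}
\mathrm{Var}(\widetilde Q_n)=\frac1n\mathrm{Var}(h_1)+\frac{n-1}{n}\mathrm{Cov}(h_1,h_2).
\end{equation*}
The first term is $O(n^{-1})$, so it suffices to show $\mathrm{Cov}(h_1,h_2)\to0$.

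Condition on $\mathbf X:=(X_1,\dots,X_n)$. Given $\mathbf X$ the responses are independent with $\mathbb E[\mathbf 1\{Y_i\ge t\}\mid\mathbf X]=G_{X_i}(t)$, and $N(1),N(2)$ are $\mathbf X$-measurable; using $\min\{F(y),F(y')\}=\int\mathbf 1\{y\ge t\}\mathbf 1\{y'\ge t\}\,dF(t)$ this gives $\mathbb E[h_i\mid\mathbf X]=g_i$ with
\begin{equation*}
g_i:=\int G_{X_i}(t)\,G_{X_{N(i)}}(t)\,dF(t)\in[0,1].
\end{equation*}
Moreover, on the event $A_n^c$ that the index sets $\{1,N(1)\}$ and $\{2,N(2)\}$ are disjoint, $h_1$ and $h_2$ are conditionally independent given $\mathbf X$. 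Since $A_n=\{N(1)=2\}\cup\{N(2)=1\}\cup\{N(1)=N(2)\}$, exchangeability gives $P(N(1)=2)=P(N(2)=1)=(n-1)^{-1}$, and Lemma~\ref{lem:square-degree} gives $P(N(1)=N(2))=o(1)$, so $P(A_n)=o(1)$. As $h_i,g_i\in[0,1]$, the $A_n$-contributions are negligible and $\mathbb E[h_1h_2]=\mathbb E[g_1g_2]+o(1)$, while $\mathbb E[h_i]=\mathbb E[g_i]$; hence $\mathrm{Cov}(h_1,h_2)=\mathrm{Cov}(g_1,g_2)+o(1)$, and it remains to decouple the two indices in $\mathrm{Cov}(g_1,g_2)$.

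To this end, set $\Gamma(x):=\int G_x(t)^2\,dF(t)$, so that $g_i$ differs from $\Gamma(X_i)$ only in that $X_{N(i)}$ replaces one factor $X_i$. By Lemma~\ref{lem:neighbor-convergence}, $X_{N(1)}\to X_1$ a.s., and by Assumption~\ref{ass:cont}(c), for $dF$-a.e.\ $t$ the map $x\mapsto G_x(t)$ is a.s.\ continuous at $X_1$; dominated convergence---first over the probability space for each fixed such $t$, then in $t$ against the finite measure $dF$, with all integrands bounded by $1$---yields
\begin{equation*}
\mathbb E\bigl|g_1-\Gamma(X_1)\bigr|\le\mathbb E\int\bigl|G_{X_{N(1)}}(t)-G_{X_1}(t)\bigr|\,dF(t)\longrightarrow0.
\end{equation*}
Write $g_i=\Gamma(X_i)+r_i$ with $|r_i|\le1$ and $\mathbb E|r_i|\to0$, so $\mathrm{Var}(r_i)\le\mathbb E r_i^2\le\mathbb E|r_i|\to0$. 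Expanding $\mathrm{Cov}(g_1,g_2)$ into four terms, the term $\mathrm{Cov}(\Gamma(X_1),\Gamma(X_2))$ vanishes since $X_1\perp X_2$, and the remaining three are $o(1)$ by Cauchy--Schwarz, using $\mathrm{Var}(\Gamma(X_i))\le1$ and $\mathrm{Var}(r_i)\to0$. Thus $\mathrm{Cov}(g_1,g_2)\to0$, hence $\mathrm{Cov}(h_1,h_2)\to0$, and $\mathrm{Var}(\widetilde Q_n)\to0$.

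The main obstacle is the middle step. The variables $h_1$ and $h_2$ are genuinely dependent---they may share a response, or their nearest neighbors may coincide---and one must show this dependence is asymptotically negligible; conditioning on $\mathbf X$ (so the $Y_i$'s become independent) together with $P(A_n)=o(1)$ handles the \emph{overlap} part, but one still has to kill the residual dependence between the conditional means $g_1$ and $g_2$, which are functions of the \emph{entire} graph. The point is that each $g_i$ is, asymptotically in $L^1$, a function of $X_i$ alone, which is exactly what Lemma~\ref{lem:neighbor-convergence} and the continuity Assumption~\ref{ass:cont}(c) deliver.
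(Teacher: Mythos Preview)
Your proof is correct. Its skeleton coincides with the paper's: both use exchangeability to reduce $\mathrm{Var}(\widetilde Q_n)$ to a single covariance $\mathrm{Cov}(h_1,h_2)$, then condition on $\mathbf X$ and invoke $P(A_n)=o(1)$ (via Lemma~\ref{lem:square-degree}) to dispose of the overlap in the $Y$-indices. The difference lies in the final decoupling of indices $1$ and $2$. The paper works at the level of joint distribution functions: it represents $Y_i=F_{X_i}^{-1}(Z_i)$ with i.i.d.\ uniforms $Z_i$, factorizes the conditional joint CDF on the good region $R_1$, and argues directly that $P(h_1\le s,\,h_2\le t)=P(h_1\le s)P(h_2\le t)+o(1)$. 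You instead pass to the conditional means $g_i=\mathbb E[h_i\mid\mathbf X]$, obtain $\mathrm{Cov}(h_1,h_2)=\mathrm{Cov}(g_1,g_2)+o(1)$, and then kill the remaining covariance by the $L^1$ approximation $g_i\approx\Gamma(X_i)$ (Lemma~\ref{lem:neighbor-convergence} plus Assumption~\ref{ass:cont}(c)), so that the dependence collapses to $\mathrm{Cov}(\Gamma(X_1),\Gamma(X_2))=0$ by $X_1\perp X_2$. Your route avoids the quantile representation and makes the mechanism behind the decoupling---that each $g_i$ is asymptotically a function of $X_i$ alone---completely explicit; the paper's route is terser but its last equality (from $\int\phi_1\phi_2\,dP_{\mathbf X}$ to the product of marginals) implicitly relies on the same approximation you spell out.
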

\begin{proof}

We have
\begin{align}\begin{split}
 \mathrm{Var}(\widetilde Q_n)&=\frac{1}{n^2}\sum_{i\neq j} \mathrm{Cov}(\min\lbrace F(Y_i), F(Y_{N(i)})\rbrace,\min\lbrace F(Y_j), F(Y_{N(j)})\rbrace) \\
 &\quad+\frac{1}{n^2}\sum_{i=1}^n \mathrm{Var}(\min\lbrace F(Y_i), F(Y_{N(i)})\rbrace) =:\psi_{1,n}+\psi_{2,n}.
 \end{split}\end{align}
Clearly,~$\psi_{2,n}\to 0$ for~$n\to\infty$. For reasons of symmetry, the summands of~$\psi_{1,n}$ are all equal and hence
\begin{align}\begin{split}
\psi_{1,n}\leq\mathrm{Cov}( F(Y_1)\wedge F(Y_{N(1)}), F(Y_2)\wedge F(Y_{N(2)})).
\end{split}\end{align} 
Let~$F_X(t)=P(Y\leq~t|X)$ and note that we may represent~$Y_i=F_{X_i}^{-1}(Z_i)$, with i.i.d.\ random variables~$Z_i$, which are uniformly distributed on~$(0,1)$ and such that~$\mathbf{Z}=(Z_1,\ldots, Z_n)$ and~$\mathbf{X}=(X_1,\ldots, X_n)$ are independent. Then set~$F(Y_i)=:f(X_i,Z_i)$. We want to prove that~$F(Y_1)\wedge F(Y_{N(1)})$ and~$F(Y_2)\wedge F(Y_{N(2)})$ are asymptotically independent, which then implicates that~$\psi_{1,n}\to 0$. Hence, consider the joint distribution function
\begin{align}\begin{split}
  &P\left(F(Y_1)\wedge F(Y_{N(1)})\leq~s, F(Y_2)\wedge F(Y_{N(2)})\leq~t\right)\\
  &=\ev P\left(f(X_1,Z_1)\wedge f(X_{N(1)},Z_{N(1)})\leq~s, f(X_2,Z_2)\wedge f(X_{N(2)},Z_{N(2)})\leq~t\middle| \mathbf{X}\right).
\end{split}\end{align}
Since~$\mathbf{X}$ and~$\mathbf{Z}$ are independent and the nearest neighbor graph is a function of~$\mathbf{X}$, the above expectation can be written as (see \cite{durrett_probability_2019}, Example 4.1.7.)
\begin{align}
 \int P\big(f(x_1,Z_1)\wedge f(x_{n(1)},Z_{n(1)})\leq~s, f(x_2,Z_2)\wedge f(x_{n(2)},Z_{n(2)}) \leq~t\big) dP_\mathbf{X}(\mathbf{x}),\label{eq:int}
\end{align}
where we integrate over the product space of the~$X_i$'s and where~$x_{n(i)}$ is the nearest neighbor of~$x_i$ within~$\mathbf{x}=(x_1,\ldots, x_n)$.
We now split the integral over regions
\begin{equation}
R_1:=\{\mathbf{x}\colon |\{1,2,n(1),n(2)\}|=4\}\quad \text{and}\quad R_2:=R_1^c.
\end{equation}
We have
\begin{equation}
P(\mathbf{X}\in R_1^c)\leq~P(N(1)=2)+P(N(2)=1)+P(N(1)=N(2)).
\end{equation}
Notice that~$P(N(1)=2)=P(N(2)=1)=1/n$ and~$P(N(1)=N(2))=o(1)$ by Lemma~\ref{lem:square-degree}.
We conclude, that~$P(\mathbf{X}\in R_1^c)\to 0$. 
Hence,
\begin{align}\begin{split}
  &P\left(F(Y_1)\wedge F(Y_{N(1)})\leq~s, F(Y_2)\wedge F(Y_{N(2)})\leq~t\right)\\
  &= \int_{R_1} P\big(f(x_1,Z_1)\wedge f(x_{n(1)},Z_{n(1)})\leq~s, f(x_2,Z_2)\wedge f(x_{n(2)},Z_{n(2)}) \leq~t\big) dP_\mathbf{X}(\mathbf{x})\\
  &\qquad\qquad\qquad\qquad\qquad\qquad\qquad\qquad\qquad\qquad\qquad\qquad\qquad\qquad\qquad\qquad+o(1)\\
  &= \int_{R_1} P\big(f(x_1,Z_1)\wedge f(x_{n(1)},Z_{n(1)})\leq~s\big) P(f(x_2,Z_2)\wedge f(x_{n(2)},Z_{n(2)}\big) \leq~t\big) dP_\mathbf{X}(\mathbf{x})\\
  &\qquad\qquad\qquad\qquad\qquad\qquad\qquad\qquad\qquad\qquad\qquad\qquad\qquad\qquad\qquad\qquad+o(1)\\
  &= \int P\big(f(x_1,Z_1)\wedge f(x_{n(1)},Z_{n(1)})\leq~s\big) P(f(x_2,Z_2)\wedge f(x_{n(2)},Z_{n(2)}\big) \leq~t\big) dP_\mathbf{X}(\mathbf{x})\\
  &\qquad\qquad\qquad\qquad\qquad\qquad\qquad\qquad\qquad\qquad\qquad\qquad\qquad\qquad\qquad\qquad+o(1)\\
  &=P\left(F(Y_1)\wedge F(Y_{N(1)})\leq~s\right)P\left(F(Y_2)\wedge F(Y_{N(2)})\leq~t\right)+o(1).
\end{split}\end{align}
\end{proof}

\begin{proof}[Proof of Theorem~\proofreference{\ref{thm:consistency}}{1}]
Due to Lemmas~\ref{lem:expectaton-convergence} and~\ref{le:var}, the proof follows if we show that~${\ev\widetilde Q_n\to Q}$.
Noting that the variables~$\min\lbrace F(Y_i),F(Y_{N(i)})\rbrace$,~$1\leq~i\leq~n$, are identically distributed and using our derivation in Section~\proofreference{\ref{s:estimator}}{2} we get
\begin{equation}
\ev\widetilde{Q}_n= \ev \min\lbrace F(Y_1),F(Y_{N(1)})\rbrace=\int \ev G_{X_1}(t)G_{X_{N(1)}}(t)dF(t).
\end{equation}
By Lemma~\ref{lem:neighbor-convergence}, Assumption~\proofreference{\ref{ass:cont}}{1}~(c), and dominated convergence, we get the desired result.
\end{proof}

\proofreference{\subsection{Proof of Theorem~\ref{thm:growing-degree}}}{\subsection{Proof of Theorem~3}}

In this section we construct an example of a functional random variable~$X$ as in Theorem~\proofreference{\ref{thm:growing-degree}}{3}. Set~${\sigma^2=\sum_{k=1}^\infty \lambda_k}$ and~${p_k=\lambda_k/\sigma^2}$. Let~$A$ be a continuous random variable with mean~$0$ and variance~$\sigma^2$ and~$K$ a discrete random variable independent of~$A$ with~$P(K=~k)=p_k$.

\begin{Lemma} 
Set~$X=A\times e_K$. Then~$X$ has mean 0 and covariance~$\Sigma$.
\end{Lemma}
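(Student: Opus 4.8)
The plan is to verify the two assertions directly from the definition of the covariance operator given above, using only the independence of $A$ and $K$ together with the spectral decomposition of $\Sigma$. We may assume $\sigma^2>0$, since otherwise $\Sigma=0$ and $X\equiv0$ trivially has the required mean and covariance. Throughout we regard $X=A\,e_K$ as an $L^2([0,1])$-valued random element: one draws $K$ with $P(K=k)=p_k$, independently draws $A$, and returns the function $u\mapsto A\,e_K(u)$.

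First I would record that $X$ is Bochner integrable, so that $\ev X$ makes sense: since $\|e_k\|=1$ for all $k$ we have $\|X\|=|A|$, and $\ev|A|<\infty$ because $\ev A^2=\sigma^2<\infty$. Conditioning on $K$ and using independence of $A$ and $K$, $\ev\big[X\mid K=k\big]=\ev[A]\,e_k=0$ for every $k$ with $p_k>0$, so $\ev X=\sum_k p_k\,\ev[A]\,e_k=0$, giving the first claim. For the covariance, fix $v\in L^2([0,1])$ and consider $\Sigma_X(v):=\ev\big[(X-\ev X)\langle X-\ev X,v\rangle\big]=\ev\big[\langle X,v\rangle X\big]$. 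Since $\langle X,v\rangle X=A^2\langle e_K,v\rangle e_K$ and, by Cauchy--Schwarz and $\|e_K\|=1$, $\ev\|A^2\langle e_K,v\rangle e_K\|\le \ev[A^2]\,\|v\|=\sigma^2\|v\|<\infty$, the Bochner integral is absolutely convergent, so conditioning on $K$ yields
\begin{align*}
\Sigma_X(v)&=\sum_k p_k\,\ev[A^2]\,\langle e_k,v\rangle e_k=\sigma^2\sum_k p_k\,\langle e_k,v\rangle e_k\\
&=\sum_k \lambda_k\,\langle e_k,v\rangle e_k=\Sigma(v),
\end{align*}
the last equality being the spectral representation of $\Sigma$. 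Hence $\mathrm{Var}(X)=\Sigma$.

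The argument is elementary; the two points I would be careful about are the integrability bounds that license moving the expectation through the (conditional, then unconditional) sum, and the spectral identity $\Sigma(v)=\sum_k\lambda_k\langle v,e_k\rangle e_k$, which holds because $\Sigma$, being trace-class, is compact, self-adjoint and positive semidefinite, hence diagonalized by its orthonormal eigenfunctions; the terms with $\lambda_k=0$ (equivalently $p_k=0$) drop out on both sides of the display. Finally, the series $\sum_k\lambda_k\langle e_k,v\rangle e_k$ converges in $L^2$ since its terms are orthogonal and $\sum_k\lambda_k^2\langle e_k,v\rangle^2\le\lambda_1^2\|v\|^2<\infty$. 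There is no genuine obstacle here.
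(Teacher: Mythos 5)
Your proof is correct and follows essentially the same route as the paper's: use independence of $A$ and $K$ to compute $\ev[\langle X,v\rangle X]=\sigma^2\sum_k p_k\langle e_k,v\rangle e_k$ and identify this with $\Sigma(v)$ via the spectral decomposition of the compact self-adjoint operator. You simply spell out the Bochner integrability checks that the paper leaves implicit.
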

\begin{proof}
Obviously we have~$EX=0$. Then
\begin{align}\begin{split}
\ev X\langle v,X\rangle= \sigma^2\sum_{k\geq~1} P(K=k)e_k\langle v,e_k\rangle,
\end{split}\end{align}
which, by the spectral theorem for compact operators, equals to~$\Sigma(v)$.
\end{proof}
Now consider a random sample~$X_1,\ldots,X_n$ with~$X_i=A_i\times e_{K_i}\sim X$. It holds that
\begin{equation}\label{eq:example-distance}
 \|X_i-X_j\|^2=A_i^2+A_j^2-2A_iA_j1\{K_i=K_j\}.
\end{equation}

\begin{Lemma}\label{le:lowerboundLn}
 For any integer~$k\geq~1$, define~$\mathcal{M}_{k,n}=\{i\leq~n\colon K_i=~k\}$,~$f_{k,n}=~|\mathcal{M}_{k,n}|$ and finally~$G_n=~|\{k:~f_{k,n}=~1\}|$. Then~$L_n\geq~G_n-1$.
\end{Lemma}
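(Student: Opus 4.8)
The plan is to exhibit a single sample point that is the nearest neighbor of at least $G_n-1$ of the others. The engine behind everything is the distance formula \eqref{eq:example-distance}: whenever $K_i\neq K_j$ we have $\|X_i-X_j\|^2=A_i^2+A_j^2$, so among points carrying pairwise distinct labels the nearest-neighbor structure is dictated purely by the magnitudes $A_\ell^2$.

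First I would introduce the set of ``lonely'' indices $\mathcal{S}=\{i\leq n\colon f_{K_i,n}=1\}$, which by definition has $|\mathcal{S}|=G_n$. For any $i\in\mathcal{S}$ we have $\mathcal{M}_{K_i,n}=\{i\}$, so $K_j\neq K_i$ for every $j\neq i$, and hence $\|X_i-X_j\|^2=A_i^2+A_j^2$ for all $j\neq i$. Minimizing over $j\neq i$, the nearest neighbor of $X_i$ is the index $j\neq i$ for which $A_j^2$ is smallest. Since $A$ is a continuous random variable, the values $A_1^2,\ldots,A_n^2$ are almost surely distinct, so this minimizer — and, in conjunction with Assumption~\ref{ass:cont}(a), the nearest neighbor $N(i)$ — is well defined a.s.

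Next I would let $i_0$ be the (a.s.\ unique) index achieving $\min_{1\leq\ell\leq n}A_\ell^2$. For every $i\in\mathcal{S}$ with $i\neq i_0$, the index $i_0$ belongs to $\{1,\ldots,n\}\setminus\{i\}$ and is the unique global minimizer of $A_\ell^2$, hence $N(i)=i_0$. At most one element of $\mathcal{S}$ can coincide with $i_0$, so at least $G_n-1$ indices $i$ satisfy $N(i)=i_0$; therefore $L_{i_0,n}\geq G_n-1$ and a fortiori $L_n\geq G_n-1$.

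Once \eqref{eq:example-distance} is available this is essentially a one-line argument, so there is no genuine obstacle; the only points needing (minor) attention are the almost-sure uniqueness of $i_0$ and of the nearest neighbors, which follow from the continuity of $A$ together with Assumption~\ref{ass:cont}(a). All the real work has already been done in the construction $X=A\times e_K$, which forces points sharing a label onto a common ray through the origin while making points with different labels interact only through $A_i^2+A_j^2$.
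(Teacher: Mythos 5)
Your proof is correct and follows essentially the same route as the paper's: identify the global minimizer of $A_\ell^2$ (the paper calls it $i^*$, you call it $i_0$) and observe that every lonely index other than $i_0$ must have $i_0$ as its nearest neighbor, since distances from a lonely point are of the form $A_i^2+A_j^2$. Your write-up is slightly more careful than the paper in naming the global minimizer explicitly and noting that at most one element of $\mathcal{S}$ can equal $i_0$, but the argument is the same.
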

\begin{proof} Suppose~$f_{k,n}=1$. Then~$\mathcal{M}_{k,n}$ contains a single element, say~$i_k$.  This means that~$X_{i_k}$ is the only curve in the sample with a shape that is proportional to the function~$e_k$. By~\eqref{eq:example-distance}, for any~$j\in\{1,\ldots,n\}$ with~$j\neq i_{k}$ we have
\begin{equation}
    \|X_{i_k}-X_j\|^2=A_{i_{k}}^2+A_j^2.
\end{equation}
This implies that the nearest neighbor of each~$X_{i_{k}}$ is~$X_{i^*}$ with ~$i^*=\mathrm{argmin}_{j\neq i_k}\, A_j^2$. If~$f_{i^*,n}=1$, then~$L_{i^*,n}\geq~G_n-1$, if~$f_{i^*,n}>1$, then even~$L_{i^*,n}\geq~G_n$ holds.
\end{proof}

\begin{proof}[Proof of Theorem~\proofreference{\ref{thm:growing-degree}}{3}]
 Without loss of generality, assume that~$\sigma^2=1$.
 \par (i) For any sequence~$x_n\to\infty$, it holds that
 \begin{equation}
   \theta_n:=P(K\geq~x_n)=\sum_{k\geq~x_n}\lambda_k\quad \text{and}\quad P(K=k)\leq~\lambda_{x_n}\quad \text{for all~$k\geq~x_n$}.
 \end{equation}
Having drawn a random sample~$(A_1,K_1),\ldots,(A_n,K_n)$, let~$\mathcal{R}_n=\{1\leq~i\leq~n\colon K_i\geq~x_n\}|$. It holds that~$ |\mathcal{R}_n|$ has a binomial distribution~$B_{n,\theta_n}$ and that
\begin{equation}
    |\lbrace i:f_{K_i,n}=1\rbrace\cap\mathcal R_n|\leq~G_n.
\end{equation}
The target is to show now (a) that~$|\mathcal R_n|\to\infty$ and (b) that~$|\lbrace i:~f_{K_i,n}=~1\rbrace\cap~\mathcal R_n|=~|\mathcal R_n|$ with high probability. For (a), we choose a sequence~$(x_n)$ which grows slowly enough for~$n\theta_n\to~\infty$.
In this case, we can always find a diverging sequence~$(\psi_n)$ such that also~$\frac{n\theta_n}{\psi_n}\to~\infty$. By elementary properties of the binomial distribution and the Cauchy-Schwarz inequality, we have
\begin{align}\begin{split}
P\left(B_{n,\theta_n}\leq~n\theta_n-\sqrt{\psi_n n\theta_n}\right)&=P\left(B_{n,1-\theta_n}-n(1-\theta_n)\geq~\sqrt{\psi_n n\theta_n}\right)\leq~\psi_n^{-1},
\end{split}\end{align}
and likewise
$
P\left(B_{n,\theta_n}\geq~n\theta_n+\sqrt{\psi_n n\theta_n}\right)\leq~\psi_n^{-1}.
$
By the choice of~$\psi_n$ we have
\begin{equation}
    n\theta_n/2\leq~n\theta_n-\sqrt{\psi_n n\theta_n}< n\theta_n+\sqrt{\psi_n n\theta_n}\leq~2n\theta_n
\end{equation}
if~$n$ is large enough.
It follows that 
\begin{equation}\label{e:Rn}
P(|\mathcal{R}_n|\geq~n\theta_n/2)\to 1\quad\text{and}\quad P(|\mathcal{R}_n|\leq~2n\theta_n)\to 1.
\end{equation}
From the left relation in \eqref{e:Rn} we immediately deduce (a). Moreover, for big enough~$n$,
\begin{align}\begin{split}
&P(|\lbrace i:f_{K_i,n}=1\rbrace\cap\mathcal R_n|= |\mathcal R_n|)/P(|\mathcal{R}_n|\leq~2n\theta_n)\\
&\quad=P\left(\text{all~$K_i$ with~$i\in\mathcal{R}_n$ are distinct}\, \big|\, |\mathcal{R}_n|\leq~2n\theta_n\right)\\
&\quad\geq\prod_{i=1}^{2n\theta_n}(1-i\lambda_{x_n}/\theta_n)\\
&\quad=\exp\left(\sum_{i=1}^{2n\theta_n}\log(1-i\lambda_{x_n}/\theta_n)\right)\\
&\quad\geq~\exp\left(-2\frac{\lambda_{x_n}}{\theta_n}\sum_{i=1}^{2n\theta_n} i\right)\\
&\quad\geq~\exp\left(-4\frac{\lambda_{x_n}}{\theta_n}\big(n\theta_n+1\big)^2\right).
\end{split}\end{align}
Hence, using the left relation in \eqref{e:Rn}, (b) follows if
\begin{equation}\label{eq:n-theta-lambda-0}
 n^2\lambda_{x_n}\theta_n\to 0.
\end{equation}
Since by assumption~$\lambda_{x_n}=o\left(\theta_n\right)$, it is possible to choose a sequence~$x_n$ such that~$n\theta_n\to\infty$ and~\eqref{eq:n-theta-lambda-0} holds.
Combining these results with Lemma~\ref{le:lowerboundLn} we infer
$P(L_n\geq~n\theta_n/2)\to 1,$ and hence that~$L_n$ diverges with a rate at least as fast as~$n\theta_n$.

(ii)  Set~$x_n=n^\frac{2}{2a-1}g(n)$, where~$g$ is another slowly varying function. Using basic properties of slowly varying functions, we obtain~$\theta_n\sim \frac{\ell(x_n)x_n^{-a+1}}{a-1}$ and thus
\begin{equation}
    n\theta_n\sim \frac{1}{a-1}\ell(n^\frac{2}{2a-1}g(n))g^{1-a}(n)n^\frac{1}{2a-1}, \quad n\to\infty,
\end{equation}
and
\begin{equation}
    n^2\lambda_{x_n}\theta_n\sim \frac{1}{a-1}\ell^2(n^\frac{2}{2a-1}g(n))g^{1-2a}(n), \quad n\to\infty,
\end{equation}
where
\begin{equation}
    h(x):=\frac{1}{a-1}\ell(x^\frac{2}{2a-1}g(x))g^{1-a}(x)
\end{equation}
is again slowly varying. By suitable choice of~$g$ we can ensure that~\eqref{eq:n-theta-lambda-0} holds. The rest of the proof follows from the arguments in (i).

 \par (iii) Note that, for~$K_i=K_j$, it holds that
 \begin{equation}
  ||X_i-X_j||^2=(A_i-A_j)^2,
 \end{equation}
while for~$K_i\neq K_j$, it holds that
\begin{equation}
 ||X_i-X_j||^2\geq~||X_i||^2=A_i^2.
\end{equation}
This implies that the only~$i\in\mathcal M_{k,n}$, for which~$N(i)\notin \mathcal M_{k,n}$ is possible, are
\begin{equation}
 i= \arg\min\lbrace A_j\colon j\in \mathcal M_{k,n}, A_j\geq~0\rbrace\quad\text{or}\quad i=\arg\max\lbrace A_j\in \mathcal M_{k,n},A_j\leq~0\rbrace.
\end{equation}
Moreover, there can be at most two indices~$j\in M_{K_i,n}$ such that~$N(j)=i$, since this implies that~$A_i$ is the nearest neighbor of~$A_j$ among~$\{ A_l:l\in\mathcal M_{K_i,n}\}$ and the scalar~$A_i$ can be the nearest neighbor to at most two other scalars in any set. Combining these observations, we obtain that
\begin{equation}\label{eq:ln-upper-bound}
 L_n\leq~2+2x_n+|\mathcal R_n|.
\end{equation}
Now choosing~$x_n$ as in (ii) and observing that~$P(|\mathcal{R}_n|\leq~2n\theta_n)\to 1$, we use \eqref{eq:ln-upper-bound} and thus obtain~$L_n=~o_P(n^{1/4})$.

\end{proof}

\proofreference{\subsection{Proof of Theorem~\ref{thm:normality}}}{\subsection{Proof of Theorem~2}}

For the remainder of this section, assume~$X_i$ and~$Y_i$ are independent.

To show the asymptotic normality of the test statistic~$\mathcal I_n$, we will first consider the quantity~$\widetilde Q_n$. We will use Lemma~\ref{thm:rinott}, a central limit theorem for graph-based statistics under local dependence, to show that it is asymptotically normally distributed. In a second step, we will replace the theoretically simpler but practically infeasible quantity~$\widetilde Q_n$ by the statistic~$\widehat Q_n$. \cite{deb_measuring_2020} and \cite{shi_power_2022} in essence use a similar strategy, but they consider a stochastic centering term and then look for a H\`ajek representation. The intermediate steps in our proof are very different from theirs, and it is unclear whether their results can be adapted to yield a central limit theorem for polynomially growing degree of~$L_n$. In comparison to \cite{deb_measuring_2020}, fixing their kernel function~$K$ to be the~$\min$-function allows us to calculate the variance of~$\widehat Q_n$ given $X$'s exactly (and efficiently) instead of estimating it.

 Let us proceed to show asymptotic normality of~$\widetilde Q_n$. To this end, we will use the following result by \cite{rinott_normal_1994}.

\begin{Lemma}\label{thm:rinott}[\cite{rinott_normal_1994}, Thm. 2.2]
 Let~$V_1,\ldots,V_n$ be random variables having a dependence graph whose maximal degree is strictly less than~$k$, satisfying
 \begin{itemize}
 \item~$|V_i-\ev(V_i)|\leq~B$ almost surely for~$i=1,\ldots,n$,
 \item~$\ev(\sum_{i=1}^n V_i)=0$ and
 \item~$\Var(\sum_{i=1}^n V_i)=1$.
 \end{itemize}
 Then
 \begin{equation}\label{eq:rinott}
 \left|P\left(\sum_{i=1} V_i\leq~z\right)-\Phi(z)\right|\leq~\sqrt{\frac{1}{2\pi}}kB+16\sqrt{n}k^{3/2}B^2+10nk^2B^3.
 \end{equation}
\end{Lemma}

\begin{Lemma}\label{lem:q-tilde-variance}
Let~$\mathcal{G}_n$ be the nearest neighbor graph generated by vertices~$X_1,\ldots,X_n$. Then
 \begin{equation}
 \widetilde W_n:=\Var(\sqrt{n}\,\widetilde Q_n|\mathcal{G}_n)=\frac{1}{n}\left(\frac{W_{n,1}}{45}+\frac{W_{n,2}}{18}\right),
 \end{equation}
where
\begin{equation} 
W_{n,1}=n+\sum_{i=1}^n L_{i,n}^2-2f_n\quad \text{and} \quad W_{n,2}=n+f_n,
\end{equation}
and where~$f_n$ is the number of indices~$i\in\{1,\ldots,n\}$, with~$N(N(i))=i$. We have~$ \widetilde W_n>\frac{4}{45}$.
\end{Lemma}
\begin{proof}
Define~$U_i=F(Y_i)$ and notice that the~$U_i$ are i.i.d.\ uniform on~$[0,1]$. We have
\begin{align}
 \widetilde W_n&=\Var\left(\frac{1}{\sqrt{n}}\sum_{i=1}^n\min\lbrace U_i,U_{N(i)}\rbrace\middle|\mathcal{G}_n\right)\nonumber\\
 &=\frac{1}{n}\sum_{i,j=1}^n \Cov\left(\min\lbrace U_i,U_{N(i)}\rbrace,\min\lbrace U_j,U_{N(j)}\rbrace|\mathcal{G}_n\right).\label{e:Uis}
\end{align}
We partition~$I_n=\{1,\ldots, n\}^2$ into the three~$\mathcal{G}_n$-measurable sets
\begin{equation}
I_n^{(k)}:=\{(i,j)\in I_n\colon |\lbrace j,N(j)\rbrace\cap\lbrace i, N(i)\rbrace|=k\},\quad k=0,1,2.
\end{equation}
Independence of~$X$ and~$Y$ implies that distinct variables among~$U_i, U_{N(i)}, U_j, U_{N(j)}$ in \eqref{e:Uis} are i.i.d. Hence, we get
\begin{align}\begin{split}
 \widetilde W_n&=\frac{|I_n^{(1)}|}{n}\Cov\left(\min\lbrace U_1,U_2\rbrace),\min\lbrace U_1,U_3\rbrace\right)+
 \frac{|I_n^{(2)}|}{n}\Var\left(\min\lbrace U_1,U_2\rbrace\right)\\
 &=\frac{1}{n}\left(\frac{|I_n^{(1)}|}{45}+\frac{|I_n^{(2)}|}{18}\right).
\end{split}\end{align}
We need to show that~$I_n^{(k)}=W_{n,k}$,~$k=1,2$. For~$(i,j)$ to be in~$I_n^{(2)}$ we have two options: either~${j=i}$ ($n$ cases) or~${j\neq i}$ and~${N(i)=j, N(j)=i}$. The latter is equivalent to~${N(N(i))=i}$ ($f_n$ cases) and hence~${I_n^{(2)}=W_{n,2}}$ follows. 

If~${(i,j)\in I_n^{(1)}}$ then~${j\neq i}$ and we have two different possibilities. Option (a) is to have~${N(i)=j}$ and~${N(j)\neq i}$ or the other way around. 
Option (b) is that~${N(i)=N(j)}$. The number of cases in (a) is equal to 
\begin{align}\begin{split}
W_{n,11}=2\times\sum_{i=1}^n\sum_{j=1}^n 1\{N(i)=j\}1\{N(j)\neq i\} 1\{i\neq j\}.
\end{split}\end{align}
Observe that for given~$i$, 
\begin{equation}
    \sum_{j=1}^n 1\{N(i)=j\}1\{N(j)\neq i\} 1\{i\neq j\}=1-~\!1\{N(N(i))=i\},
\end{equation}
and hence~$W_{n,11}=2(n-f_n)$. The number of cases in (b) is equal to
\begin{align}\begin{split}
W_{n,12}&=\sum_{i=1}^n\sum_{j=1}^n 1\{N(i)=N(j)\}1\{i\neq j\}\\
&=\sum_{i=1}^n\sum_{j=1}^n (1\{N(i)=N(j)\}-1)=\sum_{i=1}^n|\{j\colon N(j)=N(i)\}|-n=\sum_{i=1}^nL_{i,n}^2-n.
\end{split}\end{align}

As for the lower bound for~$\widetilde W_n$ note first that~$-\frac{2}{45}f_n+\frac{1}{18}f_n$ is increasing in~$f_n$. Hence, the lowest thinkable contribution coming from~$f_n$ is when~$f_n=0$. In addition, note that we have~$\sum_{i=1}^n L_{i,n}^2\geq~n$. Hence,~$\widetilde W_n\geq~\frac{2}{45}+\frac{1}{18}=\frac{1}{10}$.
\end{proof}

In the sequel, we have the following data generating mechanism in mind: 
\begin{enumerate}
\item We sample the~$X_i$'s and generate~$\mathcal{G}_n$.
\item Independent of~$\mathcal{G}_n$, we generate a random sample~$\widetilde Y_i$ with distribution function~$F$. 
\item We choose a random permutation~$\pi_n$ of~$(1,\ldots, n)$ which is independent of everything and set~$(Y_1,\ldots, Y_n)=\pi_n(\widetilde Y_{(1)},\ldots, \widetilde Y_{(n)})$, where~$\widetilde Y_{(1)}\leq~\cdots\leq~\widetilde Y_{(n)}$ denotes the ordered sample. 
\end{enumerate}
Below, we shall condition on~$\mathcal{G}_n=g$ and sometimes express dependence on~$g$ for different variables explicitly, e.g.\ by writing~$\widetilde W_n(g)$ or~$\widetilde Q_n(g)$. We let~$\mathrm{deg}(g)$ be the maximal degree of~$g$.

\begin{Proposition}\label{prop:q-tilde-normal}
Let~$\mathcal{K}_n$,~$n\geq~1$, be a collection of nearest neighbor graphs~$g$ with~$n$ vertices, such that~$\sup_{g\in \mathcal{K}_n}\mathrm{deg}(g)=o(n^{1/4})$. Then there is a null-sequence~$\psi_n$ such that

\begin{equation}
\sup_{g\in \mathcal{K}_n}\sup_{z\in\R}\left|P\left(\sqrt{n}\,\frac{\widetilde Q_n-\frac{1}{3}}{\sqrt{\widetilde W_n}}\leq~z\middle| \mathcal{G}_n=g\right)-\Phi(z)\right|\leq~\psi_n.
\end{equation}

\end{Proposition}
\begin{proof} We may write
\begin{equation}
\sqrt{\frac{n}{\widetilde W_n}}\left(\widetilde Q_n-\frac{1}{3}\right)=\sum_{i=1}^n V_i,
\end{equation}
where
\begin{equation}
    V_i=\frac{1}{\sqrt{n \widetilde W_n}}\left(\min\{F(Y_i),F(Y_{N(i)})\}-\frac{1}{3}\right).
\end{equation}
With this definition, we can infer from Lemma~\ref{lem:q-tilde-variance} that
\begin{equation}
    {|V_i|\leq~\frac{2}{3}\frac{1}{\sqrt{n \widetilde W_n}}<~\sqrt{5/n}}.
\end{equation}
Moreover, it holds that~${\Var(\sum_{i=1}^n V_i|\mathcal{G}_n=g)=1}$, while~${E(V_i|\mathcal{G}_n=g)=0}$. Hence, the variables satisfy the conditions of Lemma~\ref{thm:rinott} with~${B=\sqrt{5/n}}$.
The maximum degree of the dependence graph of the~$Y_i$ is bounded by~$2L_n$. To see this, fix~$i$. There is an edge between~$Y_i$ and~$Y_j$ with~${j\neq i}$ if either~${j=N(i)}$ (one case) or if~${N(j)=i}$ ($\leq~L_n$ cases) or if~${N(j)=N(i)}$ ($\leq L_n-1$ cases). Conditional on~${\mathcal{G}_n=g}$ with~${g\in\mathcal{K}_n}$ we have~${k=~2L_n(g)=o(n^{1/4})}$. 
\end{proof}

\begin{Proposition}\label{prop:difference-normal}
Under the setting of Proposition~\ref{prop:q-tilde-normal}, there exists a sequence~$(A_n)_{n\in\N}$ of~$N(0,4/45)$ distributed random variables such that for every~$\varepsilon>0$ there exists a null-sequence~$(\eta_n(\varepsilon))$ such that,
 \begin{equation}
 \sup_{g\in \mathcal{K}_n}P\left(|\sqrt{n}(\widehat Q_n-\widetilde Q_n)-A_n|>\varepsilon|\mathcal{G}_n=g \right)\leq~\eta_n(\varepsilon).
 \end{equation}
\end{Proposition}
\begin{proof}
As presented, for example, by \cite{van_der_vaart_asymptotic_1998}, we can define the variables~$Y_i$ on a common probability space~$(\Omega,\mathcal{A},P)$ on which there exists a sequence~$(B_n)_{n\in\N}$ of Brownian bridges such that
\begin{equation}\label{eq:brownian-approximation}
 \limsup_{n\to\infty} \frac{\sqrt{n}}{\log^2 n}\sup_{t\in\mathbb{R}}|\sqrt{n}(F_n(t)-F(t))-B_n(F(t))| <\infty \quad\text{a.s.}
\end{equation}
Now define
\begin{equation}
    A_n:=\int_0^1 (B_n\cdot h)(t)dt,
\end{equation}
 with~$h(t)=(2-2t)1\{t\in [0,1]\}$ and for some positive integer sequence~$m_n\to\infty$,~$m_n=O(n^{3/4})$, and any index~$j\in\lbrace 1,\ldots,m_n\rbrace$, we define the interval~$I_j^{m_n}=~((j-1)/m_n,j/m_n]$.
Further, we set~${Z_i=\min\lbrace F(Y_i),F(Y_{N(i)})\rbrace}$ and note that the~$Z_i$ are identically distributed with density function~$h$. We then decompose
\begin{equation}
 \sqrt{n}(\widehat Q_n-\widetilde Q_n)-A_n=D_n^1+D_n^2+D_n^3+D_n^4+D_n^5,
\end{equation}
where
\begin{align}\begin{split}
 &D_n^1=\sqrt{n}(\widehat Q_n-\widetilde Q_n)-\frac{1}{n}\sum_{i=1}^n B_n(Z_i),\\
 &D_n^2=\frac{1}{n}\sum_{i=1}^n B_n(Z_i)-\sum_{j=1}^{m_n}B_n(j/m_n)\cdot\frac{|\lbrace Z_1,\ldots,Z_n\rbrace\cap I_j^{m_n}|}{n},\\
 &D_n^3=\sum_{j=1}^{m_n}B_n(j/m_n)\cdot\frac{|\lbrace Z_1,\ldots,Z_n\rbrace\cap I_j^{m_n}|}{n}-\sum_{j=1}^{m_n}B_n(j/m_n)\int_{I_j^{m_n}}h(t)dt,\\
 &D_n^4=\sum_{j=1}^{m_n}B_n(j/m_n)\int_{I_j^{m_n}}h(t)dt-\sum_{j=1}^{m_n}\frac{1}{m_n}B_n(j/m_n)h(j/m_n),\\
 &D_n^5=\sum_{j=1}^{m_n}\frac{1}{m_n}B_n(j/m_n)h(j/m_n)-\int_0^1 (B_n\cdot h)(t)dt.
\end{split}\end{align}

We will find bounds for the quantities~$|D_n^i|$ and use the triangular inequality. 
Denote by~${\|\cdot\|_\infty}$ the sup-norm on~$L^2(\lbrack0,1\rbrack)$. Then \eqref{eq:brownian-approximation} implies that 
\begin{equation}
P\left(|D_n^1|>\frac{\log^3 n}{\sqrt{n}}\right)\\
 \leq~P\left(\|\sqrt{n}(F_n-F)-B_n\circ F\|_\infty>\frac{\log^3 n}{\sqrt{n}}\right)\to 0.
\end{equation}

Let~$C>\sqrt{2}$ and~$\alpha<1/2$. Theorem~1.1.1 in \cite{csorgo_strong_1981}, implies that there is a null-sequence~$(\delta_k)$ such that
\begin{equation}\label{e:holderB}
P\left(\sup_{0\leq~t\leq~1-1/k}\sup_{0\leq~s\leq~1/k}|B_n(t+s)-B_n(t)|> Ck^{-\alpha}\right)\leq\delta_k, \quad k\geq~1.
\end{equation}
Since the~$B_n$ are identically distributed, this inequality holds uniformly in~$n$. 
Therefore, with probability~$1-\delta_{m_n}$ we get that 
\begin{align}\begin{split}
 |D_n^2|\leq\frac{1}{n}\sum_{j=1}^{m_n}\sum_{i=1}^n |B_n(Z_i)-B_n(j/m_n)| 1\lbrace Z_i\in I_j^{m_n}\rbrace\leq~C m_n^{-\alpha}.
\end{split}\end{align}

We note that the bounds derived for~$|D_n^1|$ and~$|D_n^2|$ do not depend on the nearest neighbor graph~$\mathcal{G}_n$. Quantities~$|D_n^4|$ and~$|D_n^5|$ do not involve the nearest neighbor graph at all. This will be only relevant for bounding~$|D_n^3|$. Here we have
\begin{align}\begin{split}
 |D_n^3|\leq~\sup_{t\in[0,1]} |B_n(t)| \times\sum_{j=1}^{m_n}\left|\frac{|\lbrace Z_1,\ldots,Z_n\rbrace\cap I_j^{m_n}|}{n}-\int_{I_j^{m_n}}h(t)dt\right|.
\end{split}\end{align}
We have that~$\sup_{t\in[0,1]} |B_n(t)|=O_P(1)$ and thus it remains to show that the sum on the right above converges to zero in probability, uniformly for~$g\in \mathcal{K}_n$.
We remark that~
\begin{equation}
    \ev\left(\frac{1}{n}|\lbrace Z_1,\ldots,Z_n\rbrace\cap I_j^{m_n}| \right)=\int_{I_j^{m_n}}h(t)dt.
\end{equation}
Therefore, by Chebyshev's inequality 
\begin{align}
&P\left(\sum_{j=1}^{m_n}\left|\frac{|\lbrace Z_1,\ldots,Z_n\rbrace\cap I_j^{m_n}|}{n}-\int_{I_j^{m_n}}h(t)dt\right|>\delta\right)\nonumber\\
 &\leq~\frac{1}{\delta} \sum_{j=1}^{m_n}\ev \left|\frac{|\lbrace Z_1,\ldots,Z_n\rbrace\cap I_j^{m_n}|}{n}-\int_{I_j^{m_n}}h(t)dt\right|\nonumber
 \\&\leq~\frac{1}{\delta} \sum_{j=1}^{m_n}\mathrm{Var}^{1/2}\left(\frac{1}{n}\sum_{i=1}^n 1\{Z_i\in I_j^{m_n}\}\right)\nonumber
 \\&\leq~\frac{1}{n\delta}\sum_{j=1}^{m_n}\left(\sum_{i,k=1}^n\mathrm{Cov}(1\{Z_i\in I_j^{m_n}\},1\{Z_k\in I_j^{m_n}\})\right)^{1/2}.\label{e:lastbound}
\end{align}
In the proof of Proposition~\ref{prop:q-tilde-normal} we showed that conditional on~$\mathcal{G}_n=g$, the dependence graph of the variables~$V_i$ (which is the same as the dependence graph of the~$Z_i$) has maximal degree~$2L_n(g)$. Hence, conditional on~$\mathcal{G}_n=g$, we obtain by the Cauchy-Schwarz inequality and the fact the~$Z_i$ are identically distributed, that uniformly on~$\mathcal{K}_n$
\begin{align}\begin{split}
& \sum_{i,k=1}^n\left|\mathrm{Cov}(1\{Z_i\in I_j^{m_n}\},1\{Z_k\in I_j^{m_n}\}\right|\leq~n\times \mathrm{deg}(g) P(Z_1\in I_j^{m_n})=o\left(n^{5/4}/m_n\right).
\end{split}\end{align}
Inserting into \eqref{e:lastbound} shows that~$|D_n^3|$ converges to zero in probability, uniformly on~$\mathcal{K}_n$.
\par By similar arguments we obtain
$
|D_n^4| \leq~2\sup_{t\in[0,1]} |B_n(t)|/m_n =O_P(m_n^{-1})
$
and 
\begin{align}\begin{split}
|D_n^5|&\leq~2\sup_{t\in[0,1]} |B_n(t)|/m_n +2\sum_{j=1}^{m_n}\int_{I_j^{m_n}}|B_n(j/m_n)-B_n(t)|dt=O_P(m_n^{-1/2}).
 \end{split}\end{align}
In the last step, we used the basic fact that~$\int_{I_j^{m_n}}E|B_n(j/m_n)-B_n(t)|dt=\sqrt{\frac{2}{\pi}}\frac{2}{3}m_n^{-3/2}$.
\par It remains to show that~$A_n$ follows an~$N(0,4/45)$ distribution. Since~$B_n$ is a zero mean Gaussian process, it is clear that~$A_n$ is normally distributed with mean zero. To calculate the variance, recall that~$h(t)=2-2t$ on~$\lbrack 0,1\rbrack$ and consider
\begin{align}\begin{split}
 \Var\left(\int_0^1 B_n(t)h(t)dt\right)&=\ev\left(\left(\int_0^1B_n(t)h(t)dt\right)^2\right)\\
 &=\ev\left(\int_0^1 B_n(t)h(t)dt\int_0^1 B_n(s)h(s)ds\right)\\
 &=\int_0^1\int_0^1\ev(B_n(t)B_n(s))h(t)h(s) ds dt\\
 &=\int_0^1\int_0^1(\min\lbrace s,t\rbrace -st)h(s)h(t)ds dt=\frac{4}{45}.
\end{split}\end{align}
\end{proof}

\begin{Lemma}
 Define~$W_{n,1}$ and~$W_{n,2}$ as in Lemma~\ref{lem:q-tilde-variance}. The conditional variance of~$\widehat Q_n$ given~$\mathcal{G}_n$ is
 \begin{equation}
 W_n:= \Var(\sqrt{n}\widehat Q_n|\mathcal G_n)=\frac{W_{n,1} v_1+W_{n,2} v_2+(n^2-W_{n,1}-W_{n,2})v_0}{n},\label{e:Wn}
 \end{equation}
where
\begin{align}\begin{split}
v_0=-\frac{4(n+1)}{45n^2},\quad
 v_1=\frac{4n^4-25n^3+30n^2+25n^1-34}{180n^2(n-1)(n-2)},\quad
v_2=\frac{n^2-n-2}{18n^2}.
\end{split}\end{align}
\end{Lemma}
\begin{proof}
As in the proof of Lemma~\ref{lem:q-tilde-variance} we partition~$I_n$ into~$I_n^{(k)}$,~$k=0,1,2$ . The covariance of pairs~$U_i^\prime,U_j^\prime$, where~$U_i^\prime=\min\{F_n(Y_i),F_n(Y_{N(i)})\}$, for~$(i,j)\in I_n^{(0)}$ is given by
 \begin{equation}
 \frac{1}{n(n-1)(n-2)(n-3)}\sum\limits_{\substack{i,j,k,l=1 \\ \text{all distinct}}}^n \left(\min\left\lbrace \frac{i}{n}, \frac{k}{n}\right\rbrace-\frac{n+1}{3n}\right)\left(\min\left\lbrace \frac{j}{n}, \frac{l}{n}\right\rbrace-\frac{n+1}{3n}\right)=v_0,
 \end{equation}
 for~$(i,j)\in I_n^{(1)}$ by
 \begin{equation}
 \frac{1}{n(n-1)(n-2)}\sum\limits_{\substack{i,j,k=1 \\ \text{all distinct}}}^n \left(\min\left\lbrace \frac{i}{n}, \frac{k}{n}\right\rbrace-\frac{n+1}{3n}\right)\left(\min\left\lbrace \frac{j}{n}, \frac{k}{n}\right\rbrace-\frac{n+1}{3n}\right)=v_1,
 \end{equation}
 and for~$(i,j)\in I_n^{(2)}$ by
 \begin{equation}
 \frac{1}{n(n-1)}\sum\limits_{\substack{i,j=1 \\ \text{distinct}}}^n \left(\min\left\lbrace \frac{i}{n}, \frac{j}{n}\right\rbrace-\frac{n+1}{3n}\right)^2=v_2,
 \end{equation}
 where we used that~$\ev(\min\lbrace F_n(Y_i),F_n(Y_{N(i)})\rbrace)=(n+1)/3n$.
\end{proof}

Note that under Assumption~\proofreference{\ref{ass2}}{2},
\begin{equation}
    \frac{W_n}{\widetilde W_n-\frac{4}{45}}\to 1\quad \text{for}\quad n\to\infty,
\end{equation}
since
\begin{equation}
    \frac{v_1}{n}\sim~\frac{1}{45n},\quad \frac{v_2}{n}\sim~\frac{1}{18n}\quad\text{and}\quad {n^2-W_{n,1}-W_{n,2}\sim n^2}.
\end{equation}

\begin{Proposition}
 Suppose that Assumption~\proofreference{\ref{ass2}}{2} holds. Then, for any~$z\in\R$,
 \begin{equation}
 \left|P\left(\sqrt{n}\frac{\widehat Q_n-\frac{1}{3}}{W_n}\leq~z\right)-\Phi(z)\right|\to 0,
 \end{equation}
as~$n\to\infty$.
\end{Proposition}
\begin{proof}
The construction of the Brownian bridges in \eqref{eq:brownian-approximation} only involves~$F_n$ which is a function of the order statistics~$Y_{(i)}$ and thus~$A_n$ can be assumed to be independent of~$\mathcal{G}_n$ and~$\pi_n$. In contrast, we may represent
\begin{equation}
    \widehat Q_n=\frac{1}{n^2}\sum_{i=1}^n \pi_n(i)\wedge \pi_n(N(i))
\end{equation}
and hence it is measurable with respect to~$\mathcal{G}_n$ and~$\pi_n$. It follows that~$\widehat Q_n$ and~$A_n$ are independent. 

Due to Proposition~\ref{prop:difference-normal} and the fact that~$\widetilde{W}_n\geq~\frac{1}{10}$
\begin{equation}
\sup_{g\in\mathcal{K}_n}P\left(\frac{1}{\sqrt{\widetilde{W}_n}}\left|\sqrt{n}(\widetilde Q_n-\frac{1}{3})-\sqrt{n}\left(\widehat Q_n-\frac{1}{3}\right)-A_n\right|>\varepsilon\middle| \mathcal{G}_n=g\right)\leq~\eta_n(\varepsilon/\sqrt{10}).
\end{equation}
Thus, we have for any~$z\in\mathbb{R}$
\begin{align}\begin{split}
&\sup_{g\in\mathcal{K}_n}P\left(\sqrt{\frac{1}{\widetilde{W}_n}}\left(\sqrt{n}\left(\widehat Q_n-\frac{1}{3}\right)+A_n\right)\leq~z\middle| \mathcal{G}_n=g\right)\\
&\leq~\sup_{g\in\mathcal{K}_n}P\left(\sqrt{\frac{n}{\widetilde{W}_n}}\left(\widetilde Q_n-\frac{1}{3}\right)\leq~z+\varepsilon\middle| \mathcal{G}_n=g\right)+\eta_n(\varepsilon/\sqrt{10})\\
&\leq~\Phi(z+\varepsilon)+\psi_n+\eta_n(\varepsilon/\sqrt{10})\leq~\Phi(z)+\frac{\varepsilon}{\sqrt{2\pi}}+\psi_n+\eta_n(\varepsilon/\sqrt{10}).
\end{split}\end{align}
An analogous lower bound can be established, yielding that
\begin{equation}
\sup_{g\in\mathcal{K}_n}\sup_{z\in\mathbb{R}}\left|P\left(\sqrt{\frac{1}{\widetilde{W}_n}}\left(\sqrt{n}\left(\widehat Q_n-\frac{1}{3}\right)+A_n\right)\leq~z\middle| \mathcal{G}_n=g\right)-\Phi(z)\right|=o(1).
\end{equation}
Note that conditionally on~$\mathcal{G}_n=g$,~$\widetilde W_n$ is deterministic and
\begin{equation}
    S_{2,n}:=\sqrt{\frac{1}{\widetilde{W}_n}}A_n\sim~N\left(0,\frac{1}{\widetilde{W}_n}\frac{4}{45}\right)
\end{equation}
is independent of
\begin{equation}
    S_{1,n}:=\sqrt{\frac{n}{\widetilde{W}_n}}(\widehat Q_n-\frac{1}{3}).
\end{equation}
For the rest of the proof, we use conditional characteristic functions. For some~$A\in\mathcal{A}$ we denote~$\varphi_{X|A}(t)=E[e^{\ii tX}|A]$,~$t\in\mathbb{R}$ and~$\ii =\sqrt{-1}$. 

It follows from our previous derivations that for any~$t\in\mathbb{R}$
\begin{equation}\label{e:char}
\sup_{g\in\mathcal{K}_n}\left|\varphi_{S_{1,n}|\mathcal{G}_n=g}(t)e^{-\frac{1}{\widetilde W_n(g)}\frac{2}{45}t^2}-e^{-\frac{1}{2}t^2}\right|\to 0.
\end{equation}
In fact, it can be shown that the convergence in \eqref{e:char} holds uniformly on any compact interval (see e.g.\ Theorem~2.66 in \cite{jeffreys_theory_2003}). 
The lower bound~$\widetilde W_n(g)\geq~1/10$ implicates that uniformly for~$t$ in any compact interval
\begin{align}\begin{split}
&\sup_{g\in\mathcal{K}_n}\left|\varphi_{S_{1,n}|\mathcal{G}_n=g}(t)-e^{-\frac{1}{2}\left( 1-\frac{1}{\widetilde W_n(g)}\frac{4}{45}\right)t^2}\right|\to 0.
\end{split}\end{align}
Define
\begin{equation}
    R_n:=\left( 1-\frac{1}{\widetilde W_n}\frac{4}{45}\right)
\end{equation}
and observe that~$1/\sqrt{R_n}\in [1,3]$. Hence, if~$t$ is fixed, then~$t/\sqrt{R_n}$ remains in a compact interval. Thus,
\begin{align}\begin{split}
&\sup_{g\in\mathcal{K}_n}\left|\varphi_{S_{1,n}|\mathcal{G}_n=g}(t/\sqrt{R_n(g)})-e^{-\frac{1}{2}t^2}\right|=\sup_{g\in\mathcal{K}_n}\left|\varphi_{S_{1,n}/\sqrt{R_n(g)}|\mathcal{G}_n=g}(t)-e^{-\frac{1}{2}t^2}\right|\to 0.
\end{split}\end{align}
Now note that~$S_{1,n}/\sqrt{R_n}=\sqrt{\frac{n}{\widetilde W_n-4/45}}\left(\widehat Q_n-\frac{1}{3}\right)$. We just proved that for any~$z\in\mathbb{R}$
\begin{equation}
\left|P\left(\sqrt{\frac{n}{\widetilde W_n-4/45}}\left(\widehat Q_n-\frac{1}{3}\right)\leq~z\middle| \mathcal{G}_n\in\mathcal{K}_n\right)-\Phi(z)\right|\to 0.
\end{equation}
The proof follows by observing that under Assumption~\proofreference{\ref{ass2}}{2} we have that~$P(\mathcal{G}_n\in\mathcal{K}_n)\to 1$, and that~${W_n\sim \widetilde W_n-\frac{4}{45}}$,~${n\to\infty}$.
\end{proof}

\proofreference{\subsection{Proof of Theorem~\ref{thm:ass-2-fulfilled}}}{\subsection{Proof of Theorem~4}}\label{s:ass1}

We define~$X_i^q$ as the projection of~$X_i$ onto the space spanned the first~$q$ eigenfunctions, i.e.
\begin{equation}
    X_1^q=\sum_{k=1}^q\langle X_1,e_k\rangle e_k.
\end{equation}
In analogy to~$L_{1,n}$ define~$L_{i,n}^q$  as the corresponding numbers we get with~$X_1,\ldots, X_n$ replaced by~$X_1^q,\ldots, X_n^q$. It is elementary that 
\begin{align}
P(L_{n}>k)& \leq~P(L_{n}^q>k)+P(L_{n}> L_{n}^q)\leq~P(L_{n}^q>k)+nP(L_{1,n}> L_{1,n}^q).\label{eq:nb-prob}
\end{align}

For bounding the first term in \eqref{eq:nb-prob} we will use the multivariate bound of \cite{kabatjanski_bounds_1978} and for the second the following result:
\begin{Proposition}\label{p:Ln} Under Assumption~\proofreference{\ref{ass:density}}{3} we have
\begin{equation}
    P(L_{1,n}> L_{1,n}^q)\leq\mathrm{const}\times n^2 \,\left(\sum_{j>q}\lambda_j\right)^{1/2},
\end{equation}
where the constant is independent of~$n$ and~$q$.
\end{Proposition}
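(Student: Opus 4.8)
The plan is to reduce the event $\{L_{1,n}>L_{1,n}^q\}$ to a union of ``nearest-neighbor reversal'' events and then to bound each of them by combining an anti-concentration estimate with the fact that the discarded tail $\sum_{m>q}(\cdot)^2$ is small in $L^1$. First I would write $N(j)$ and $N^q(j)$ for the indices of the nearest neighbors of $X_j$ among $X_1,\dots,X_n$ and of $X_j^q$ among $X_1^q,\dots,X_n^q$ (a.s.\ unique under Assumption~\ref{a:density}) and use the elementary identity $L_{1,n}-L_{1,n}^q=\sum_{j\neq1}\bigl(1\{N(j)=1\}-1\{N^q(j)=1\}\bigr)$, which forces, on $\{L_{1,n}>L_{1,n}^q\}$, the existence of some $j\neq1$ with $N(j)=1$ but $N^q(j)\neq1$, and hence of a further index $k\notin\{1,j\}$ with $d(X_j^q,X_k^q)<d(X_j^q,X_1^q)$ while $d(X_j,X_1)\le d(X_j,X_k)$. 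A union bound over the at most $n^2$ ordered pairs $(j,k)$ then reduces everything to bounding, for a single such pair, the probability of this joint distance reversal by $\mathrm{const}\times\bigl(\sum_{m>q}\lambda_m\bigr)^{1/2}$.

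For the per-pair bound I would split each squared distance into projected part plus tail: with $Z_{im}=\langle X_i,e_m\rangle$ and $R_{jk}:=\sum_{m>q}(Z_{jm}-Z_{km})^2$, $R_{1j}:=\sum_{m>q}(Z_{1m}-Z_{jm})^2$, orthonormality gives $d(X_j,X_1)^2=d(X_j^q,X_1^q)^2+R_{1j}$ and $d(X_j,X_k)^2=d(X_j^q,X_k^q)^2+R_{jk}$, so the reversal event forces
\begin{equation*}
0<d(X_j^q,X_1^q)^2-d(X_j^q,X_k^q)^2\le R_{jk}-R_{1j}\le R_{jk}.
\end{equation*}
Since the scores have variance $\lambda_m$, one has $\ev R_{jk}=2\sum_{m>q}\lambda_m$, which is the ultimate source of the claimed rate: after a Jensen step $\ev\sqrt{R_{jk}}\le(\ev R_{jk})^{1/2}$ we recover $\bigl(\sum_{m>q}\lambda_m\bigr)^{1/2}$.

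The technical heart — and the step I expect to be the main obstacle — is the anti-concentration estimate. Conditionally on $X_j$ and $X_k$, the tail $R_{jk}$ and the number $c:=d(X_j^q,X_k^q)^2$ are fixed, while $d(X_j^q,X_1^q)^2=\sum_{m=1}^q(Z_{1m}-z_{jm})^2=:S$ is a function of the independent scores $Z_{11},\dots,Z_{1q}$ alone, so the displayed event becomes $S\in(c,c+R_{jk}]$. Isolating the first coordinate (whose score $Z_{11}$ has the bounded density $f_1$), $S=(Z_{11}-z_{j1})^2+T$ with $T\ge0$ independent of $Z_{11}$; since the preimage under $u\mapsto(u-z_{j1})^2$ of any interval of length $r$ has Lebesgue measure at most $2\sqrt r$, the bound $P((Z_{11}-z_{j1})^2\in(\alpha,\alpha+r])\le2\|f_1\|_\infty\sqrt r$ holds for every $\alpha$, and conditioning on $T$ as well yields $P(S\in(c,c+R_{jk}]\mid X_j,X_k)\le2\|f_1\|_\infty\sqrt{R_{jk}}$ \emph{uniformly} in the conditioning variables. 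Taking expectations and applying Jensen gives the per-pair bound $2\sqrt2\,\|f_1\|_\infty\bigl(\sum_{m>q}\lambda_m\bigr)^{1/2}$, and summing over the $\le n^2$ pairs finishes the proof. The delicate points are that the conditioning genuinely renders $S$ independent of $(X_j,X_k)$ (so the square-root bound survives the integration) and that one can extract from the projected distance a single coordinate with uniformly bounded density — which is exactly what Assumption~\ref{a:density} is tailored to provide.
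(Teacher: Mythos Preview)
Your argument is correct, and it takes a genuinely different route from the paper's.

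The paper first reduces with a single union bound (one factor of $n$) to the event $\{\|X_1-X_{N(1|q)}\|^2>\|X_1-X_2\|^2\}$, then splits this with a threshold $\epsilon$: a Markov bound on the tail $\|\Pi^q(X_1-X_{N(1|q)})\|^2$ contributes $\epsilon^{-1}\sum_{k>q}\lambda_k$, while the complementary event is controlled by an anti-concentration bound for the \emph{spacing} $\|X_1^q-X_{M(1|q)}^q\|^2-\|X_1^q-X_{N(1|q)}^q\|^2$ (Lemma~\ref{l:4}), which in turn rests on Pyke's spacing density formula together with Lemmas~\ref{l:1}--\ref{l:3} showing that the sum of \emph{two} squared shifted scores has a bounded density (hence the need for both $f_1$ and $f_2$). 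This yields a bound linear in $\epsilon$ with an $n^2$ prefactor; optimizing over $\epsilon$ produces the second factor of $n$ and the square root. By contrast, you take the $n^2$ up front via a double union bound over pairs $(j,k)$, and then get the square root directly from a one-coordinate anti-concentration estimate $P\bigl((Z_{11}-z)^2\in I\bigr)\le 2\|f_1\|_\infty\sqrt{|I|}$ combined with Jensen on $\sqrt{R_{jk}}$. Your route is more elementary---it avoids the chain of density lemmas and the order-statistics machinery entirely---and, as a bonus, it uses only the boundedness of $f_1$, not of $f_2$. The paper's approach, on the other hand, isolates a sharper intermediate statement (a linear small-interval bound for the projected spacing) that could be of independent use.
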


For the proof of Proposition~\ref{p:Ln} we need a few preparatory lemmas. In the sequel, we consider independent random variables~$Z_1$,~$Z_2$ and~$Z$ having bounded densities~$f_1(s)$,~$f_2(s)$ and~$f(s)$.
\begin{Lemma}\label{l:1}
Let~$f_{1+2}(s)$ be the density function of~$Z_1+Z_2$. Then
\begin{equation}
\sup_s f_{1+2}(s)\leq~\min\{\sup_s f_1(s),\sup_s f_2(s)\}.
\end{equation}
\end{Lemma}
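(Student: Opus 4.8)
The plan is to identify the density of $Z_1+Z_2$ with the convolution $f_1*f_2$ and then bound the convolution integral by extracting the supremum of one of the two factors, using that the other factor integrates to $1$. Since $Z_1$ and $Z_2$ are independent and absolutely continuous, for every Borel set $A$ a Tonelli argument and the change of variables $x=s-t$ give
\[
P(Z_1+Z_2\in A)=\iint \mathbf{1}_A(x+t)\,f_1(x)f_2(t)\,dx\,dt=\int_A\Big(\int_{\mathbb{R}} f_1(s-t)f_2(t)\,dt\Big)ds,
\]
so that $f_{1+2}(s)=\int_{\mathbb{R}} f_1(s-t)f_2(t)\,dt$ is a version of the density; I would simply take this everywhere-defined formula as the representative of $f_{1+2}$.

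Next I would estimate, for every $s\in\mathbb{R}$,
\[
f_{1+2}(s)=\int_{\mathbb{R}} f_1(s-t)f_2(t)\,dt\le\Big(\sup_{u\in\mathbb{R}}f_1(u)\Big)\int_{\mathbb{R}} f_2(t)\,dt=\sup_{u\in\mathbb{R}}f_1(u),
\]
using $f_1,f_2\ge 0$ and $\int_{\mathbb{R}}f_2=1$. Writing the same integral as $\int_{\mathbb{R}} f_2(s-t)f_1(t)\,dt$ (commutativity of convolution, i.e.\ the symmetric role of the two summands), the identical computation yields $f_{1+2}(s)\le\sup_{u}f_2(u)$. Taking the minimum of the two bounds, and then the supremum over $s$, gives the assertion.

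The one point worth a sentence of care is the meaning of $\sup_s$: densities are only determined up to null sets, so I would read $\sup_u f_i(u)$ as the essential supremum of $f_i$ (finite by Assumption~\ref{a:density} in the application) and fix the convolution formula above as the canonical version of $f_{1+2}$ — this is legitimate because $f_1\in L^\infty$, $f_2\in L^1$, so the integral is finite and well defined pointwise. With these conventions the displayed inequalities hold for every $s$. I do not expect any genuine obstacle here: the lemma is in essence Young's convolution inequality $\|f_1*f_2\|_\infty\le\|f_1\|_\infty\|f_2\|_1$ combined with its symmetric counterpart, and the proof is the two-line estimate above.
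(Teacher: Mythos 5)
Your argument is correct and is exactly the paper's proof: write $f_{1+2}(s)=\int f_1(s-t)f_2(t)\,dt$, factor out $\sup f_1$ (resp.\ $\sup f_2$) and use $\int f_2 = 1$ (resp.\ $\int f_1 = 1$), then take the minimum of the two bounds. The only addition is your (reasonable) remark about choosing the convolution as the canonical version of the density and reading the suprema as essential suprema, which the paper leaves implicit.
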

\begin{proof}
We have~$f_{1+2}(s)=\int_{-\infty}^\infty f_1(s-t)f_2(t)dt$. Now factor out~$\sup_{t} f_1(s-t)$ or~$\sup_{t} f_2(t)$.
\end{proof}

The next lemma is also elementary.
\begin{Lemma}\label{l:2}
Let~$c\in \mathbb{R}$ be a constant. Then~$(Z-c)^2$ has density
\begin{equation}
\frac{1}{2\sqrt{s}}\big(f(c+\sqrt{s})+f(c-\sqrt{s})\big)I\{s> 0\}.
\end{equation}
\end{Lemma}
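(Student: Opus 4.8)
The plan is to obtain the density of $W:=(Z-c)^2$ from its distribution function by an elementary change of variables. First I would note that $W\ge 0$ almost surely, so $P(W\le s)=0$ for every $s\le 0$; this is what produces the indicator $I\{s>0\}$ in the claimed formula. For $s>0$ I would rewrite the event as $\{(Z-c)^2\le s\}=\{c-\sqrt{s}\le Z\le c+\sqrt{s}\}$, whence, writing $F$ for the distribution function of $Z$,
\[
P(W\le s)=F(c+\sqrt{s})-F(c-\sqrt{s}).
\]

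Next I would identify the candidate function as a density of $W$ by verifying that it integrates this distribution function, which avoids assuming anything about $F$ beyond absolute continuity with density $f$. Concretely, for $s>0$ the substitutions $t=c+\sqrt{u}$ and $t=c-\sqrt{u}$ on $(0,s)$ give
\[
\int_0^s \frac{1}{2\sqrt{u}}\bigl(f(c+\sqrt{u})+f(c-\sqrt{u})\bigr)\,du=\int_c^{c+\sqrt{s}}f+\int_{c-\sqrt{s}}^{c}f=F(c+\sqrt{s})-F(c-\sqrt{s})=P(W\le s),
\]
so the stated nonnegative function is indeed a density of $W$. (Equivalently, one may simply differentiate $s\mapsto F(c+\sqrt{s})-F(c-\sqrt{s})$ using the chain rule and $\tfrac{d}{ds}\sqrt{s}=\tfrac1{2\sqrt{s}}$, which formally yields the same expression; the integration argument is cleaner when $f$ is not assumed continuous.)

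There is essentially no obstacle here. The only minor points to mention are that the candidate density has an integrable singularity at $s=0$, so it is a bona fide density — nonnegative and with total mass $F(\infty)-F(-\infty)=1$ — and that its value at the single point $s=0$ is immaterial, densities being determined only up to Lebesgue-null sets. I would close by remarking that boundedness of $f$ plays no role in this lemma (it enters the later estimates only through Lemma~\ref{l:1}); all that is used is that $f$ is a density of $Z$.
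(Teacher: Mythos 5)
Your proof is correct and is exactly the standard elementary argument the authors have in mind; the paper states Lemma~\ref{l:2} without proof, calling it elementary, and the computation you give (passing to the distribution function of $(Z-c)^2$ and either differentiating or verifying by substitution) is precisely the routine derivation being omitted. Your remark that the verification-by-integration version is cleaner when $f$ is merely a density (not assumed continuous) is a sensible observation, and your closing note that boundedness of $f$ is irrelevant here is also accurate.
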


\begin{Lemma}\label{l:3}
Let~$c_1, c_2\in \mathbb{R}$ be constants. The density of~$(Z_1-c_1)^2+(Z_2-c_2)^2$ is bounded from above by\,~$\pi\times \sup_s f_1(s)\times\sup_s f_2(s)$.
\end{Lemma}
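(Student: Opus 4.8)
The idea is to write $(Z_1-c_1)^2+(Z_2-c_2)^2$ as a sum of two independent random variables and control the convolution of their densities. Set $W_i=(Z_i-c_i)^2$ and $M_i:=\sup_s f_i(s)$ for $i=1,2$. Since $Z_1,Z_2$ are independent, so are $W_1,W_2$, and therefore $W_1+W_2$ has density equal to the convolution of the densities of $W_1$ and $W_2$.

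\textbf{Step 1: bound the marginal densities.} By Lemma~\ref{l:2}, $W_i$ has density $g_i(w)=\frac{1}{2\sqrt{w}}\bigl(f_i(c_i+\sqrt{w})+f_i(c_i-\sqrt{w})\bigr)I\{w>0\}$. Bounding each of the two density values by $M_i$ gives the crude but sufficient estimate $g_i(w)\le \frac{M_i}{\sqrt{w}}\,I\{w>0\}$.

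\textbf{Step 2: convolve and compute.} For $s>0$ the density of $W_1+W_2$ at $s$ equals $\int_0^s g_1(w)g_2(s-w)\,dw$, which by Step 1 is at most $M_1M_2\int_0^s \frac{dw}{\sqrt{w}\,\sqrt{s-w}}$. The substitution $w=s\sin^2\theta$ turns this last integral into $\int_0^{\pi/2}2\,d\theta=\pi$, a value independent of $s$; equivalently it is the Beta integral $B(\tfrac12,\tfrac12)=\pi$. Hence the density of $(Z_1-c_1)^2+(Z_2-c_2)^2$ is bounded everywhere by $\pi M_1M_2=\pi\,\sup_s f_1(s)\,\sup_s f_2(s)$, which is the claim.

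\textbf{Main obstacle.} There is essentially no obstacle here: the only computation of substance is the elementary Beta integral in Step 2, and the argument is a direct application of Lemma~\ref{l:2} together with the convolution formula for sums of independent variables. The one point worth a word of care is the integrability/finiteness of the convolution integral near the endpoints $w=0$ and $w=s$, but the singularities are of order $w^{-1/2}$ and $(s-w)^{-1/2}$, hence integrable, so the bound is legitimate.
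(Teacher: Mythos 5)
Your proof is correct and follows essentially the same route as the paper: bound the two square-of-shift densities from Lemma~\ref{l:2} by $M_i/\sqrt{w}$, convolve, and evaluate the resulting Beta integral $\int_0^s w^{-1/2}(s-w)^{-1/2}\,dw = \pi$. The paper carries out the same computation, merely plugging in the explicit densities before bounding rather than bounding each factor first.
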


\begin{proof}
Let~$g_1$ and~$g_2$ be the densities of~$(Z_1-c_1)^2$ and~$(Z_2-c_2)^2$, respectively. Then the density of~$(Z_1-c_1)^2+(Z_2-c_2)^2$ is given as
\begin{align}\begin{split}
g(s)&=\int_{-\infty}^\infty g_1(s-t)g_2(t)dt\\
&=\int_{0}^s\!\frac{1}{4\sqrt{s-t}\sqrt{t}}\big(f_1(c_1\!+\!\sqrt{s-t})+f_1(c_1-\sqrt{s-t})\big)\big(f_2(c_2+\sqrt{t})+f_2(c_2-\sqrt{t})\big)dt\\
&\leq~\int_{0}^s\frac{1}{\sqrt{s-t}\sqrt{t}}dt\times \sup_s f_1(s)\sup_s f_2(s)=\pi\times\sup_s f_1(s)\times\sup_s f_2(s).
\end{split}\end{align}
\end{proof}

Let~$X_{N(1|q)}^q$ and~$X_{M(1|q)}^q$ be the nearest and second-nearest neighbor of~$X_1^q$, respectively, among the observations~$X_2^q,\ldots, X_n^q$.
\begin{Lemma}\label{l:4} Under Assumption~\proofreference{\ref{ass:density}}{3} we have
\begin{equation}
P\left(\|X_1^q-X_{M(1|q)}^q\|^2-\|X_1^q-X_{N(1|q)}^q\|^2<\epsilon\right)\leq~\mathrm{const}\times n^2\epsilon,
\end{equation}
where the constant is independent of~$n$,~$q$ and~$\epsilon$.
\end{Lemma}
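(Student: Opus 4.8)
The plan is to reduce the statement to a pointwise anti-concentration bound for the difference of two squared interpoint distances among the projected sample, for which the preparatory Lemmas~\ref{l:1}--\ref{l:3} are exactly tailored.

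Write $D_j:=\|X_1^q-X_j^q\|^2=\sum_{k=1}^q(Z_{1k}-Z_{jk})^2$ for $j\in\{2,\dots,n\}$, where $Z_{ik}=\langle X_i,e_k\rangle$; under Assumption~\ref{a:density} together with i.i.d.\ sampling all the $Z_{ik}$ are mutually independent. As will be shown in the course of the argument, each $D_j$ has a density, so that almost surely the $D_j$ are pairwise distinct and the nearest and second-nearest neighbours of $X_1^q$ are well defined at distinct indices. On that almost sure event, $\|X_1^q-X_{M(1|q)}^q\|^2-\|X_1^q-X_{N(1|q)}^q\|^2$ is the gap between the two smallest of the $D_j$, and if this gap is $<\epsilon$ then $|D_j-D_l|<\epsilon$ for the (random) pair realising the two minima. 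Hence
\[
P\Big(\|X_1^q-X_{M(1|q)}^q\|^2-\|X_1^q-X_{N(1|q)}^q\|^2<\epsilon\Big)\ \le\ \sum_{2\le j<l\le n} P\big(|D_j-D_l|<\epsilon\big),
\]
so it suffices to bound each summand by $\mathrm{const}\times\epsilon$ with a constant independent of $n$, $q$, $\epsilon$; since there are at most $n^2/2$ such pairs, this gives the claim.

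For a fixed pair $j\neq l$ (and $q\ge2$, which is all that is needed: in the application $q=q_n\to\infty$, and Assumption~\ref{a:density} only supplies bounded densities $f_1,f_2$), condition on the scores $(Z_{1k})_{k\le q}$ and $(Z_{lk})_{k\le q}$. Then $D_l$ becomes a constant, and, writing $c_k:=Z_{1k}$, we have $D_j=(c_1-Z_{j1})^2+(c_2-Z_{j2})^2+R_j$ with $R_j=\sum_{k=3}^q(c_k-Z_{jk})^2$ independent of $(Z_{j1},Z_{j2})$, while conditionally $Z_{j1}\sim f_1$, $Z_{j2}\sim f_2$. By Lemma~\ref{l:3} (which rests on Lemma~\ref{l:2}) the conditional density of $(c_1-Z_{j1})^2+(c_2-Z_{j2})^2$ is at most $\pi\sup_s f_1(s)\sup_s f_2(s)$; by Lemma~\ref{l:1} convolving with the independent term $R_j$ does not increase this bound, and a deterministic shift by $-D_l$ leaves it unchanged. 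Thus the conditional density of $D_j-D_l$ is bounded by $\pi\sup_s f_1(s)\sup_s f_2(s)$, whence $P(|D_j-D_l|<\epsilon\mid (Z_{1k})_k,(Z_{lk})_k)\le 2\pi\sup_s f_1(s)\sup_s f_2(s)\,\epsilon$; taking expectations removes the conditioning, and the lemma follows with $\mathrm{const}=\pi\sup_s f_1(s)\sup_s f_2(s)$.

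I expect the only delicate step to be the first reduction: the event in the lemma is \emph{not} a single near-collision event, but on the almost sure event that the $D_j$ are pairwise distinct it is contained in the union of the pairwise events $\{|D_j-D_l|<\epsilon\}$, which is what legitimises the union bound. Everything after that -- the choice of conditioning and the application of Lemmas~\ref{l:1}--\ref{l:3} -- is routine; the one structural point to keep in mind is that the density bound genuinely requires two coordinates with bounded densities, so the lemma is invoked only for $q\ge2$.
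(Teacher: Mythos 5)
Your proof is correct, but it follows a genuinely different route from the paper's. The paper conditions on $X_1^q=x$, observes that the $n-1$ squared distances $V_k=\|x-X_k^q\|^2$ are then i.i.d.\ with a density $g_{x,q}$, and invokes Pyke's formula for the density of the smallest spacing $V_{(2)}-V_{(1)}$; the $n^2$ factor comes out as $(n-1)(n-2)$ from that formula, and the uniform bound on $g_{x,q}$ is supplied by Lemmas~\ref{l:1} and~\ref{l:3} exactly as in your argument. You instead replace the spacing-density machinery by a union bound over the $\binom{n-1}{2}$ pairs and then prove a pairwise anti-concentration bound $P(|D_j-D_l|<\epsilon)\le 2\pi\sup f_1\sup f_2\,\epsilon$ by conditioning on $(Z_{1k})_k$ and $(Z_{lk})_k$. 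This is more elementary (no appeal to order-statistics spacings), and conveniently yields essentially the same constant, $\pi\sup f_1\sup f_2\, n^2\epsilon$. The one structural difference worth noting is the choice of conditioning: the paper conditions once on $X_1^q$ to make all $V_k$ exchangeable and apply a global spacings result, whereas you condition differently for each pair $(j,l)$, which is harmless because you only need a per-pair bound before applying the union bound. Your observation that the argument requires $q\ge2$ is also implicit in the paper's proof (via the step $\sup g_{x,q}\le\sup g_{x,2}$), so the two proofs rely on the same ingredients from Assumption~\ref{a:density} and Lemmas~\ref{l:1}--\ref{l:3}.
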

\begin{proof}
Let us first obtain a bound conditional on~$X_1^q=x$. Conditional on this event, we have that~${\|X_1^q-X_k^q\|^2\stackrel{iid}{\sim} \|x-X_k^q\|^2}$,~${k=2,\ldots, n}$. Letting~$V_k=V_k(x,q)=\|x-X_k^q\|^2$ and~$V_{(k)}$ being the corresponding order statistics with~$V_{(1)}\leq~\cdots\leq~V_{(n-1)}$, we have
\begin{equation}
P\left(\|X_1^q-X_{M(1|q)}^q\|^2-\|X_1^q-X_{N(1|q)}^q\|^2<\epsilon\big| X_1^q=x\right)=P(V_{(2)}-V_{(1)}<\epsilon).
\end{equation}

We will now derive the density of~$V_{(2)}-V_{(1)}$ and show that it is bounded by some constant depending only on~$n$,~$f_{1}$ and~$f_{2}$. 
To this end, note that Assumption~\proofreference{\ref{ass:density}}{3} assures that~$V_1(x,q)$ has a density function. This is because~$V_1(x,q)=\sum_{k=1}^q (c_k - Z_{1k})^2$, where~${c_k=\langle x,e_k\rangle}$. Let us denote this density by~$g_{x,q}(s)$ and the corresponding distribution function by~$G_{x,q}(s)$. Then by \cite{pyke_spacings_1965} the density of the spacing~$D:=V_{(2)}-V_{(1)}$ between the two smallest variables 
is given as
\begin{align}\begin{split}
f_D(s)&=(n-1)(n-2)\int_0^\infty (1-G_{x,q}(t+s))^{n-3}g_{x,q}(t)g_{x,q}(t+s)dt\\
&\quad \leq~n^2\sup_{t\geq~0} g_{x,q}(t)\leq~n^2\sup_{t\geq~0} g_{x,2}(t)\leq~n^2\pi\sup_t f_{1}(t)\sup_t f_{2}(t).
\end{split}\end{align}
The latter two inequalities follow from Lemma~\ref{l:1} and Lemma~\ref{l:3}. It follows that 
\begin{equation}
P\left(\|X_1^q-X_{M(1|q)}^q\|^2-\|X_1^q-X_{N(1|q)}^q\|^2<\epsilon\big| X_1^q=x\right)\leq~\mathrm{const}\times n^2\epsilon, 
\end{equation}
where the constant is independent of~$n$,~$q$,~$\epsilon$ and~$x$.
\end{proof}

\begin{proof}[Proof of Proposition~\ref{p:Ln}]
We observe that
\begin{equation}
\{L_{1,n}> L_{1,n}^q\}\subset \{\exists i\in\{2,\ldots, n\}\colon \|X_i-X_{N(i|q)}\|^2>\|X_1-X_i\|^2\}.
\end{equation}
Thus, by reasons of symmetry, we have 
\begin{align}\begin{split}
P(L_{1,n}> L_{1,n}^q)&\leq~n P(\|X_1-X_{N(1|q)}\|^2>\|X_2-X_1\|^2).
\end{split}\end{align}
Denote by~$\Pi^q$ the projection onto the space spanned by~$\{e_{q+1},e_{q+2},\ldots\}$. Then by Pythagoras' theorem we have 
$
\|X_1-X_{N(1|q)}\|^2=\|X_1^q-X_{N(1|q)}^q\|^2+\|\Pi^q(X_1-X_{N(1|q)})\|^2.
$
Thus for any~$\epsilon>0$ it holds that
\begin{align}\begin{split}
&P(\|X_1-X_{N(1|q)}\|^2>\|X_1-X_2\|^2)\\
&=P(\|\Pi^q(X_1-X_{N(1|q)})\|^2>\|X_1-X_2\|^2-\|X_1^q-X_{N(1|q)}^q\|^2)\\
&\leq~P(\|\Pi^q(X_1-X_{N(1|q)})\|^2>\epsilon)+P(\|X_1-X_2\|^2-\|X_1^q-X_{N(1|q)}^q\|^2\leq~\epsilon)\\
&\leq~\epsilon^{-1}\times E\|\Pi^q(X_1-X_{N(1|q)})\|^2+P(\|X_1^q-X_2^q\|^2-\|X_1^q-X_{N(1|q)}^q\|^2\leq~\epsilon)\\
&\leq~\epsilon^{-1}\times\sum_{k>q}E\langle X_1-X_{N(1|q)},e_k\rangle^2+\mathrm{const}\times n^2\epsilon.
\end{split}\end{align}
We have used the Markov inequality and Lemma~\ref{l:4}. We remark that the independence of the scores~$\langle X_1,e_k\rangle$ for~$k\geq~1$, and the fact that~$N(1|q)$ is measurable with respect to the~$\sigma$-algebra generated by~$\{\langle X_i,e_k\rangle\colon 1\leq~i\leq~n,\, 1\leq~k\leq~q\}$ imply that~$\langle X_1,e_k\rangle~$ and~$\langle X_{N(1|q)},e_k\rangle$ are independent and have the same distribution if~$k>q$. Thus, 
\begin{equation}
\sum_{k>q}E\langle X_1-X_{N(1|q)},e_k\rangle^2=2\sum_{k>q}E\langle X_1,e_k\rangle^2=2\sum_{k>q}\lambda_k.
\end{equation}
We have shown that 
$
P(L_{1,n}> L_{1,n}^q)\leq~\mathrm{const}\times n \left(\epsilon^{-1}\times\sum_{k>q}\lambda_k+ n^2\epsilon\right).
$
The proof is concluded by setting~$\epsilon=~n^{-1}\times~\left(\sum_{k>q}\lambda_k\right)^{1/2}$.
\end{proof}

\begin{Proposition}\label{prop:general-condition-clt}
 Let Assumption~\proofreference{\ref{ass:density}}{3} hold and assume that we can choose~$q=q_n$ such that
 \begin{align}
  \sum_{j>q} \lambda_j &=o(n^{-6}),\label{eq:lambda-bound}\\
  P(L_n^{q}>n^{1/4})&=o(1).\label{eq:degree-growth} 
  \end{align}
Then Assumption~\proofreference{\ref{ass2}}{2} is fulfilled.
\end{Proposition}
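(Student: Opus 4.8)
The plan is to read off Assumption~\ref{ass2} directly from the splitting inequality \eqref{eq:nb-prob} and the quantitative estimate of Proposition~\ref{p:Ln}; everything substantial is already in place, so what remains is a short calculation matching up the polynomial rates.

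I would fix $\varepsilon>0$ and take $q=q_n$ as in the hypothesis. Applying \eqref{eq:nb-prob} at the threshold $k=\varepsilon n^{1/4}$,
\begin{equation*}
P\bigl(L_n>\varepsilon n^{1/4}\bigr)\le P\bigl(L_n^{q_n}>\varepsilon n^{1/4}\bigr)+n\,P\bigl(L_{1,n}>L_{1,n}^{q_n}\bigr),
\end{equation*}
and treat the two summands separately. For the second, Proposition~\ref{p:Ln} bounds $P(L_{1,n}>L_{1,n}^{q_n})$ by $\mathrm{const}\times n^{2}\bigl(\sum_{j>q_n}\lambda_j\bigr)^{1/2}$ with a constant independent of $n$ and $q_n$, so by \eqref{eq:lambda-bound}
\begin{equation*}
n\,P\bigl(L_{1,n}>L_{1,n}^{q_n}\bigr)\le \mathrm{const}\times n^{3}\Bigl(\sum_{j>q_n}\lambda_j\Bigr)^{1/2}=\mathrm{const}\times n^{3}\cdot o\bigl(n^{-3}\bigr)=o(1);
\end{equation*}
the exponent $6$ in \eqref{eq:lambda-bound} is precisely what is required, since Proposition~\ref{p:Ln} supplies a factor $n^{2}$ times a square root of $\sum_{j>q_n}\lambda_j$ and the union bound over the $n$ vertices in \eqref{eq:nb-prob} a further factor $n$. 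For the first summand, \eqref{eq:degree-growth} gives $P(L_n^{q_n}>\varepsilon n^{1/4})\le P(L_n^{q_n}>n^{1/4})=o(1)$ when $\varepsilon\ge1$; for $\varepsilon<1$ I would use \eqref{eq:degree-growth} in the sharper form $L_n^{q_n}=o_P(n^{1/4})$ that holds in the intended applications (in Theorem~\ref{thm:ass-2-fulfilled}, $L_n^{q_n}$ is in fact deterministically $o(n^{1/4})$ by the finite-dimensional degree bound of \cite{kabat}). Combining the two bounds yields $P(L_n>\varepsilon n^{1/4})\to 0$ for every $\varepsilon>0$, that is, $L_n=o_P(n^{1/4})$.

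Since \eqref{eq:nb-prob} and Proposition~\ref{p:Ln} are already established, this last step carries no real difficulty; the delicate work sits upstream, in Proposition~\ref{p:Ln} itself (where Assumption~\ref{a:density} enters through the spacings bound of Lemma~\ref{l:4}) and in the verification of \eqref{eq:degree-growth} together with the choice of an admissible $q_n$, which are carried out in the proof of Theorem~\ref{thm:ass-2-fulfilled}.
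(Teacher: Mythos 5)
Your argument follows exactly the route the paper takes, which is simply to combine \eqref{eq:nb-prob} with Proposition~\ref{p:Ln}; the exponent bookkeeping you make explicit (the factor $n^{2}$ from Proposition~\ref{p:Ln}, the further factor $n$ from the union bound in \eqref{eq:nb-prob}, and the square root against the rate $o(n^{-6})$ in \eqref{eq:lambda-bound}) is precisely what makes the rates match. Your additional remark---that \eqref{eq:degree-growth} as literally written controls only the event $\{L_n^{q_n}>n^{1/4}\}$ and so yields $P(L_n>n^{1/4})\to 0$ rather than the full $o_P(n^{1/4})$ claim of Assumption~\ref{ass2}, so one must either read \eqref{eq:degree-growth} as $L_n^{q_n}=o_P(n^{1/4})$ or rely, as in Theorem~\ref{thm:ass-2-fulfilled}, on the deterministic degree bound of \cite{kabat}---is a fair and correct observation that the paper's one-line proof leaves implicit.
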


\begin{proof}
The result is immediate from \eqref{eq:nb-prob} and Proposition~\ref{p:Ln}.
\end{proof}

\begin{proof}[Proof of Theorem~\proofreference{\ref{thm:ass-2-fulfilled}}{4}]
 As shown by \cite{kabatjanski_bounds_1978}, if~$n$ is large enough, it holds that~${P(L_n^{q}>n^{1/4})=0}$ for ${\gamma>2^{0.401}}$, and~$q_n\ <\frac{1}{4}\log_\gamma n$, a condition that is met for our choice of~$q_n$. Finally, condition \eqref{eq:lambda-bound} follows directly from our assumptions.
\end{proof}

\begin{Remark}
    We note that a much slower rate of decay for~$\lambda_j$ can be obtained if we could allow a bigger value for~$q$ in \eqref{eq:degree-growth}. While to the best of our knowledge very little is known about distributional properties of~$L_{n}^q$ (even for fixed~$q$), it seems certain that the exponential growth obtained from \cite{kabatjanski_bounds_1978} is way too conservative when applied to a random sample. This is confirmed in extensive simulations,  where we found~$L_n$ to grow at a relatively slow rate, even in cases of very slowly decaying eigenvalues, as, for example, for the Brownian motion. Based on the fact that nearest neighbors constitute a ``local'' property, we conjecture that for fixed~$q$ and~$k$, the probability~$P(L_n^q>k)$ does not depend on the distribution of the score vectors~$\Pi^q X$ as long as they have continuous densities.
\end{Remark}

\section*{Acknowledgements}
The authors would like to thank Maximilian Ofner for helpful discussions and the compilation of the dataset for the real-world application example. We would also like to thank Norbert Henze and Mathew Penrose for providing insightful feedback.

\section*{Funding}
This research was funded by the Austrian Science Fund (FWF) [P 35520]. For the purpose of open access, the authors have applied a CC BY public copyright license to any Author Accepted Manuscript version arising from this submission.

\bibliography{references}

\begin{appendix}
\section{Simulations}\label{s:empirical}
In this section, we apply our dependence measure and the resulting test for independence~$\mathcal I_n$ to simulated data. In Subsection~\ref{ss:dep} we compute~$\widehat T_n$ in different scenarios and compare to the popular \emph{distance correlation}, whose empirical version is here denoted by~$\widehat R_n$ (see Definition~5 in \cite{szekely_measuring_2007}). We refer to \cite{dehling_distance_2020} for a detailed discussion of the applicability of distance correlation to discretized functional data. We follow this approach with~$p=200$ equidistant sampling points per curve. We will compare our independence test~$\mathcal{I}_n$ to permutation tests based on distance correlation~($\mathcal{I}^\mathrm{DC}_n$) and ball correlation ($\mathcal{I}^\mathrm{BC}_n$), as well as to an independence test for functional data developed by \cite{garcia-portugues_goodness--fit_2014}~($\mathcal{I}^\mathrm{CvM}_n$). The latter is a bootstrap test utilizing a Cram\'er-von-Mises type statistic. See Sections~\ref{ss:finsamp} and~\ref{ss:power}. We refer to \cite{szekely_measuring_2007}, \cite{pan_ball_2020} and \cite{garcia-portugues_goodness--fit_2014} for details on these competing approaches.

\par For our simulation study, we let~${X=\sum_{k=1}^{20} Z_ke_k(u)}$ where~$e_k(u)=\sqrt{2}\sin((k-1/2)\pi u)$ and~$Z_k\stackrel{\text{ind.}}{\sim} N(0, 0.3^k)$, modeling fast decaying eigenvalues (in fact~$\lambda_k=0$ for~$k>20$) and hence being in line with the assumptions in Theorem~\simulationreference{\ref{thm:ass-2-fulfilled}}{4}.

We have done our simulation experiments for sample sizes~$n=20$ (small)~$n=100$ (medium) and~$n=1000$ (large), but for the sake of space, not all results are presented here.

\par Our implementation of the test for independence is available via the R package~\blindedtext{\texttt{FDEP}}{} from the second author's GitHub profile\blindedtext{, \cite{strenger_fdep_2024}}{}. The competing statistics are implemented in the R-packages \texttt{energy} \citep{rizzo_energy_2022}, \texttt{Ball} \citep{zhu_ball_2023} and \texttt{fda.usc} \citep{febrero-bande_fdausc_2022}.

\subsection{Dependence measure}\label{ss:dep}
In our first simulation exercise, we mimic several types of relationships between~$X$ and~$Y$. For a given sample size we generate~$B=500$ samples to compute~$\widehat T_n$ and~$\widehat R_n$. The relation between~$Y$ and~$X$ is determined as~$Y=f(X)+ \varepsilon$, where~${\varepsilon\sim N(0,\sigma^2)}$ independent of~$X$. We chose~$\sigma^2$ such that~$r^2:={{\mathrm{Var}(f(X))}/\mathrm{Var}(Y)}$ is 100$\%$, 90$\%$, 50$\%$ and 10$\%$, respectively. The functions~$f:H\to \mathbb{R}$ are chosen as follows:
\begin{itemize}
    \item[({\tt ind})]~$f(X)=0$,
    \item[({\tt int})]~$f(X)=\int_0^1 X(t)dt$,
    \item[({\tt sqnorm})]~$f(X)=\int_0^1 X(t)^2dt$,
    \item[({\tt weight})] ~$f(X)=\int_0^1 t^2X(t)dt$,
    \item[({\tt sin})]~$f(X)=\sin\left(2\pi\int_0^1 X(t)dt\right)$,
    \item[({\tt max})]~$f(X)=\max_{t\in\lbrack 0,1\rbrack} X(t)$,
    \item[({\tt range})~]$f(X)=\max_{t\in\lbrack 0,1\rbrack} X(t)-\min_{t\in\lbrack 0,1\rbrack} X(t)$, and
    \item[({\tt eval})]~$f(X)=X(0.5)$.
\end{itemize}
 Since we do not have a closed-form expression for the scaling coefficients~$\sigma$, we simply estimate them from the~$n\times B$ signals~$f(X_i^b)$ ($i$-th observation in the~$b$-th sample). Table~\ref{tab:coeff-comparision-100-exp_decay} shows the means and standard deviations of the calculated values of~$\widehat T_n$ and~$\widehat R_n$ for sample size~${n=100}$. In Figure~\ref{fig:visu} the functional relationships between~$Y$ and~$X$ under Setups~({\tt sqnorm}) and~({\tt sin}) are visualized.
 
  \begin{figure}[t!]
\captionsetup[subfigure]{labelformat=empty}
\begin{subfigure}[b]{0.3\textwidth}
 \includegraphics[width=\textwidth]{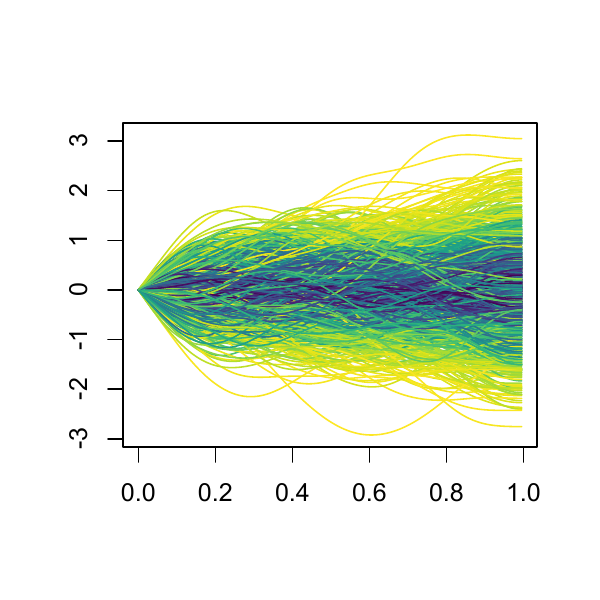}
 \includegraphics[width=\textwidth]{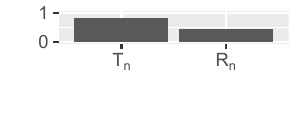}
 \caption{(a)~$r^2=100\%$}
\end{subfigure}
\begin{subfigure}[b]{0.3\textwidth}
 \includegraphics[width=\textwidth]{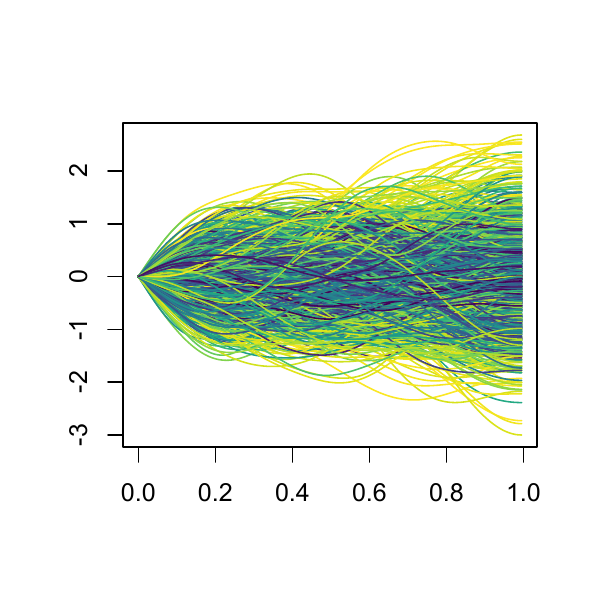}
 \includegraphics[width=\textwidth]{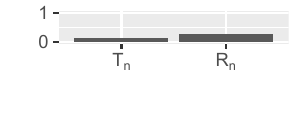}
 \caption{(b)~$r^2=50\%$}
\end{subfigure}
 \begin{subfigure}[b]{0.3\textwidth}
 \includegraphics[width=\textwidth]{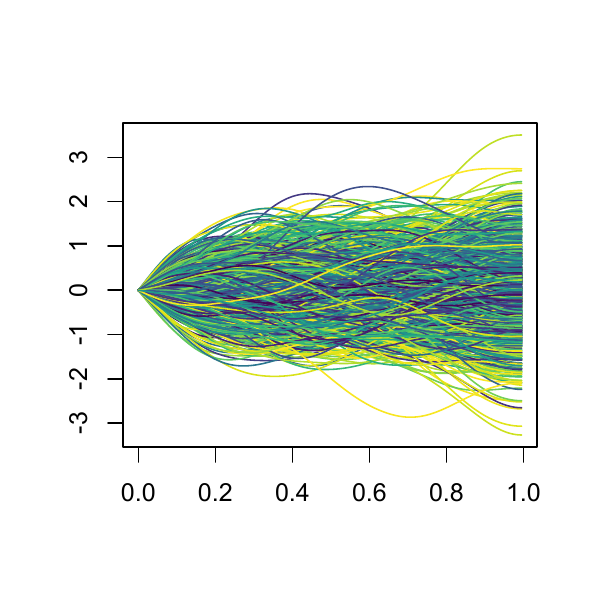}
 \includegraphics[width=\textwidth]{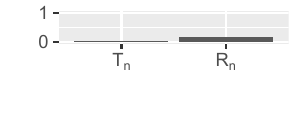}
 \caption{(c)~$r^2=10\%$}
\end{subfigure}
\\

\begin{subfigure}[b]{0.3\textwidth}
 \includegraphics[width=\textwidth]{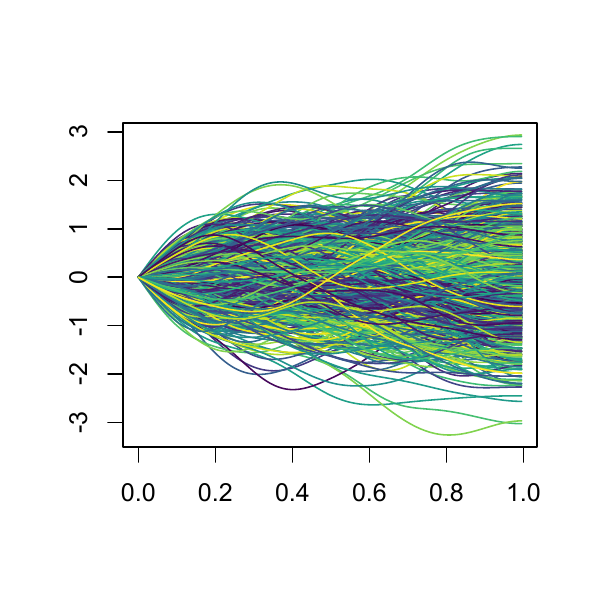}
 \includegraphics[width=\textwidth]{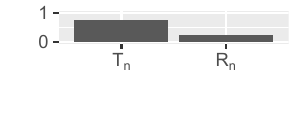}
 \caption{(a)~$r^2=100\%$}
\end{subfigure}
\begin{subfigure}[b]{0.3\textwidth}
 \includegraphics[width=\textwidth]{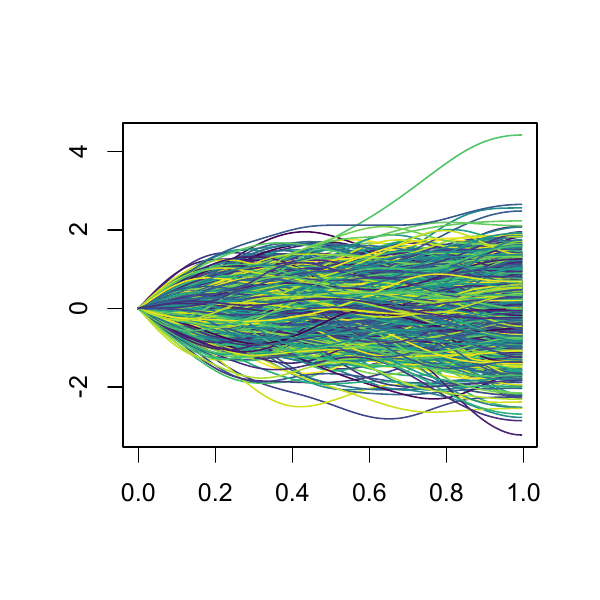}
 \includegraphics[width=\textwidth]{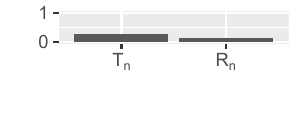}
 \caption{(b)~$r^2=50\%$}
\end{subfigure}
 \begin{subfigure}[b]{0.3\textwidth}
 \includegraphics[width=\textwidth]{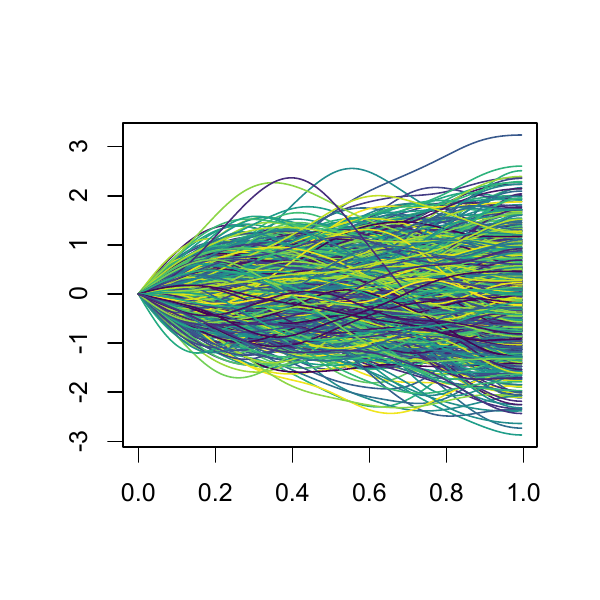}
 \includegraphics[width=\textwidth]{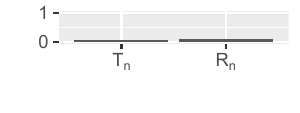}
 \caption{(c)~$r^2=10\%$}
\end{subfigure}
\\
\caption{Visualization of the \texttt{sqnorm} (top) and \texttt{sin} (bottom) relationships. The~$n=1000$ curves~$X$ are colored according to~$Y$. The bar plots compare the values of~$\widehat T_n$ and~$\widehat R_n$.}\label{fig:eval-visualisation}
\label{fig:visu}
\end{figure}

\begin{table}
\begin{center}
 \begin{tabular}{l|cccc|cccc}
 \hline
 & \multicolumn{4}{c}{$\widehat T_n$} & \multicolumn{4}{c}{$\widehat R_n$}\\
 \hline
 {\tt ind} & \multicolumn{4}{c}{0} & \multicolumn{4}{c}{0.19}\\
 & \multicolumn{4}{c}{(0.12)} & \multicolumn{4}{c}{(0.02)}\\
 \hline
~$r^2$ & 1 & 0.9 & 0.6 & 0.1 & 1 & 0.9 & 0.5 & 0.1\\
 \hline
 \input{means-sds-100-exp_decay.csv}
\end{tabular}
\end{center}
\caption{Comparison of~$\widehat T_n$ and~$\widehat R_n$ with a sample size of~$n=100$. Standard deviations are given in brackets.}\label{tab:coeff-comparision-100-exp_decay}
\end{table}

We can observe that~$\widehat R_n$ takes larger values than~$\widehat T_n$ in most cases, especially for high levels of noise. This indicates a higher ability to detect noisy relationships. A notable exception is the sine of integral relationship {\tt sin}, for which~$\widehat T_n$ takes higher values than~$\widehat R_n$ at all levels of noise. This is in line with the observation in \cite{chatterjee_new_2020}, stating that the approach discussed here is powerful in situations where the relation between~$X$ and~$Y$ is `oscillatory' in nature. Note that for small and medium sample sizes, the large values of~$\widehat R_n$ seem to be in part due to a strong bias---observe the high value of~$\widehat R_n=0.194$ for independent data. This bias has mostly vanished at the large sample size~$n=1000$.

\subsection{Finite sample distribution and running times}\label{ss:finsamp}
Next, we investigate the distribution of the test statistic~$\mathcal{I}_n$ under independence (setting {\tt ind}). In Figure~\ref{fig:dist-independence}, we created 5000 samples of~$X$ -- under Setup~(b) -- and~$Y$ -- uniformly distributed on~$\lbrack0,1\rbrack$ and independent of~$X$ -- and plot the histogram of the~$\mathcal{I}_n$'s along with the density of a standard normal distribution. 
\begin{figure}[t!]
\centering
  \begin{subfigure}[b]{0.45\textwidth}
   \resizebox{\textwidth}{!}{\input{density-20.tex}}
   \caption{$n=20$}
   \end{subfigure}
  \begin{subfigure}[b]{0.45\textwidth}
   \resizebox{\textwidth}{!}{\input{density-100.tex}}
   \caption{$n=100$}
   \end{subfigure}
    \caption{Histogram of~$\mathcal I_n$ under independence and comparison to the standard normal density.}
    \label{fig:dist-independence}
\end{figure}
Even for sample size~$n=20$, the quality of the normal approximation is quite reasonable. The~$p$-values of the Shapiro-Wilk test for~$n=20,100,1000$ are 0.00001, 0.34 and 0.89, respectively, confirming the good approximation by the limiting law.

One of the advantages of~$\mathcal I_n$ compared to the competing methods is that it is an asymptotic test and thus comes with significantly shorter running time. While it is known that~$\mathcal{I}^\mathrm{DC}_n$ and $\mathcal{I}^\mathrm{BC}_n$ asymptotically follow infinite mixtures of~$\chi^2$-distributions, the mixing weights depend on the marginal distributions of~$X$ and~$Y$ and can therefore not be used for an independence test. With regard to the sample size, the number of operations for calculation of~$\mathcal{I}^\mathrm{DC}_n$ and~$\mathcal{I}^\mathrm{DC}_n$ is of order~$n^2$ and~$n^3$, respectively, while it is only of order~$n\log n$ for computing~$\mathcal{I}_n$. For the simulation, we chose~$200$ resampling steps for the permutation and bootstrap tests. This is usually not considered enough (\cite{garcia-portugues_goodness--fit_2014} recommend at least 5000 steps for their test). This small number was chosen to keep the computation times reasonable for the simulation experiment. Table~\ref{tab:runtimes} shows the results. As expected, the computation time of~$\mathcal{I}_n$ increases considerably slower than the time required to perform the other tests. For~$n=10000$, the~$\mathcal{I}^\mathrm{CvM}_n$-based test was aborted after~90 minutes.
\begin{table}
 \centering
 \begin{tabular}{c|cccc}
 \hline
~$n$ &~$\mathcal{I}_n$ &~$\mathcal{I}^\mathrm{DC}_n$ &~$\mathcal{I}^\mathrm{BC}_n$ &~$\mathcal{I}^\mathrm{CvM}_n$\\
 \hline
 100 & <0.01& <0.01 & 0.11 & 0.98\\
 500 & 0.01& 0.07 & 3.00 & 4.15\\
 1000& 0.06& 0.38& 12.94 & 15.59\\
 2000 & 0.25& 1.75& 54.89 & 100.03\\
 10000 & 5.52& 52.84& 1591.2 & >90 mins
 \end{tabular}
\caption{Running times (in seconds) of the three independence tests for different sample sizes.}\label{tab:runtimes}
\end{table}

\subsection{Power study}\label{s:independence-test}\label{ss:power}
In this section, we compare the independence tests~$\mathcal{I}_n$,~$\mathcal{I}^\mathrm{DC}_n$,~$\mathcal{I}^\mathrm{BC}_n$ and~$\mathcal{I}^\mathrm{CvM}_n$ in terms of power. We estimated the powers of the respective tests for the models {\tt int}, {\tt sqnorm}, {\tt weight}, {\tt sin}, {\tt max}, {\tt range} and {\tt eval}. We create~$B= 500$ samples of medium size~$n=100$ for each type of relationship and~${r_i^2=i/10}$,~${0\leq~i\leq~10}$. The results can be seen in Figure~\ref{fig:powers-exp_decay}.
\begin{figure}[h!]
\resizebox{0.45\textwidth}{!}{
 \input{powers-int-exp_decay.tex}
}
\resizebox{0.45\textwidth}{!}{
 \input{powers-sqnorm-exp_decay.tex}
}\\

\resizebox{0.45\textwidth}{!}{
 \input{powers-weight-exp_decay.tex}
}
\resizebox{0.45\textwidth}{!}{
 \input{powers-sin-exp_decay.tex}
}\\

\resizebox{0.45\textwidth}{!}{
 \input{powers-max-exp_decay.tex}
}
\resizebox{0.45\textwidth}{!}{
 \input{powers-range-exp_decay.tex}
}\\

\centering
\resizebox{0.45\textwidth}{!}{
 \input{powers-eval-exp_decay.tex}
}

\caption{Estimated powers of the three tests for independence at different levels of noise.}\label{fig:powers-exp_decay}
\end{figure}
We can observe that in most settings~$\mathcal{I}^\mathrm{DC}_n$ has the highest power. In absence of noise~$\mathcal{I}_n$,~$\mathcal{I}^\mathrm{DC}_n$ and~$\mathcal{I}^\mathrm{BC}_n$ have power close to one in most considered settings. It seems that~$\mathcal{I}^\mathrm{DC}_n$ and~$\mathcal{I}^\mathrm{BC}_n$ are less sensitive to noise, in the sense that the power of the test decreases slower with increasing noise level. The power of~$\mathcal{I}^\mathrm{CvM}_n$ can be rather low if the relationship between~$X$ and~$Y$ is non-linear, even in the absence of noise. For {\tt sin} and {\tt range} the power is close to the nominal level of the test in the absence of noise. For {\tt sin}, the tests based on~$\mathcal{I}_n$ and~$\mathcal{I}^\mathrm{BC}_n$ performs remarkably well, even in comparison to the one based on~$\mathcal{I}^\mathrm{DC}_n$. Moreover, these two are the only tests that achieve (near) perfect power in the absence of noise in all considered settings. The only drawback of~$\mathcal{I}^\mathrm{BC}_n$ seems to be the lack of an asymptotic theory and the resulting high computational cost of the resampling based test. This can be a significant disadvantage when dealing with very large amounts of data.
\end{appendix}

\end{document}